\crefname{equation}{eq.}{eqs.}
\newtheorem{theorem}{Theorem}
\newtheorem{proposition}[theorem]{Proposition}%
\newtheorem{lemma}[theorem]{Lemma}
\theoremstyle{remark}%
\newtheorem{remark}{Remark}%
\theoremstyle{definition}%
\newtheorem{definition}{Definition}%
\newcommand{\simulparen}[1]{\left\{ \begin{aligned} #1 \end{aligned} \right.}
\DeclarePairedDelimiter\paren{\lparen}{\rparen}
\DeclarePairedDelimiterX{\inpr}[2]{\langle}{\rangle}{{#1},{#2}}
\DeclarePairedDelimiterX{\setI}[2]{\{}{\}}{\,{#1}\ \delimsize| \ {#2}\,}
\DeclarePairedDelimiter{\setE}{\{}{\}}
\DeclarePairedDelimiter{\abs}{|}{|}
\DeclarePairedDelimiter{\norm}{\|}{\|}
\newcommand{\RR}{\mathbb{R}}
\newcommand{\CC}{\mathbb{C}}
\newcommand{\dd}{\mathrm{d}}
\newcommand{\x}[1]{x^{\paren*{#1}}}
\newcommand{\Order}[1]{\mathrm{O} \paren*{#1}}
\newcommand{\dv}[2]{\frac{\dd #1}{\dd #2}}
\newcommand{\pdv}[2]{\frac{\partial #1}{\partial #2}}
\newcommand{\e}{\mathrm{e}}
\title{Essential Convergence Rates of Continuous-Time Models for Optimization Methods\thanks{%
A preliminary version of this paper appeared in JSIAM Lett., 14 (2022), pp. 119–122. https://doi.org/10.14495/jsiaml.14.119
}}
\author{Kansei Ushiyama\thanks{Department of Mathemathical Informatics,
Graduate School of Information Science and Technology,
The University of Tokyo, Tokyo 113-8656, Japan
  (\texttt{ushiyama-kansei074@g.ecc.u-tokyo.ac.jp}, \url{https://kanseiushiyama.github.io/}).}
\and Shun Sato\thanks{Department of Mathematical Sciences, Graduate School of Science,
Tokyo Metropolitan University, Tokyo 192-0397, Japan (\texttt{shun\_sato@tmu.ac.jp}).}
\and Takayasu Matsuo\thanks{Department of Mathemathical Informatics,
Graduate School of Information Science and Technology,
The University of Tokyo, Tokyo 113-8656, Japan
  (\texttt{matsuo@mist.i.u-tokyo.ac.jp}).}}
\date{}
\begin{document}

\maketitle

\begin{abstract}
Designing and analyzing optimization methods via continuous-time models expressed as ordinary differential equations (ODEs) is a promising approach for its intuitiveness and simplicity. A key concern, however, is that the convergence rates of such models can be arbitrarily modified by time rescaling, rendering the task of seeking ODEs with ``fast'' convergence meaningless. To eliminate this ambiguity of the rates, we introduce the notion of the {\em essential convergence rate}. We justify this notion by proving that, under appropriate assumptions on discretization, no method obtained by discretizing an ODE can achieve a faster rate than its essential convergence rate.
\end{abstract}



\section{Introduction}
Some optimization methods can be modeled as ordinary differential equations (ODEs) by taking the limit as their step sizes vanish, as exemplified by the well-known relationship between the steepest descent method and gradient flow. This perspective has facilitated both the derivation and analysis of optimization methods, while also providing physical intuition into the optimization process, as demonstrated in Polyak’s seminal work~\cite{P63} on the heavy-ball method.

This approach gained significant attention following the introduction of the continuous time limit of Nesterov's accelerated gradient method (AGM)~\cite{N83} by Su et al.~\cite{SBC14}, which offered a novel viewpoint on the underlying mechanisms of acceleration. Since then, numerous ODEs have been proposed to model various accelerated methods, such as the triple momentum method (TMM)~\cite{VFL18} and the information-theoretic exact method (ITEM)~\cite{TD22}, as discussed in~\cite{KY23b}, as well as ODEs derived via alternative continuous-limit techniques, such as high-resolution ODEs~\cite{SDJS22}.
This line of work has now become a powerful framework for understanding and designing optimization methods, as surveyed in~\cite{ADA21}, with notable advances including~\cite{BCL20b,BCL21,HHF21,L22,ACFR222,DJ21,SPR23,SP24b}.

The efficiency of ODE models is characterized by their convergence rates, which have been analyzed for various ODEs under different settings. Consistency between the convergence rates of ODE models and those of the corresponding discrete optimization methods provides valuable insight into algorithm behavior. Moreover, identifying ODE models with fast convergence rates can guide the development of efficient optimization algorithms, as demonstrated in~\cite{KBB15}.

Numerous frameworks for analyzing the convergence rates of ODE models have been proposed, e.g., in~\cite{DO19,SRR22,MTB23,KY23b,USM24,BPRS24}. Since convergence proofs for ODEs are often more straightforward than for their discrete counterparts, this approach offers clearer understanding of convergence proofs through analogies between continuous-time and discrete-time analyses~\cite{USM23}.

However, an efficient continuous-time model cannot be identified solely by comparing convergence rates, since these rates can be arbitrarily modified through time-rescaling without changing the solution trajectory, i.e., the underlying convergence mechanism. Consequently, this arbitrariness of the rates renders the question ``what is the fastest ODE?'' meaningless.

For example, if $f\colon \RR^d \to \RR$ is convex, the gradient flow \[
\dot{x}(t) = - \nabla f(x(t)),\quad x(0)=x_0 \in \RR^d,\] where $\dot{x}$ denotes the time derivative of $x$, achieves the rate $f(x(t)) - f^\star = \Order{1/t}$ (cf.~\cite{SBC16}), where $f^\star$ is the optimal value. However, a dynamics attaining a faster rate $\Order{1/t^2}$ can be obtained only by considering the time-rescaled gradient flow $\dot{x}(t) = -t\nabla f(x(t))$. 
The same rate is also achieved by the solution $x(t)$ of the system
\begin{equation}
       \ddot{x}(t) + \frac{3}{t}\dot{x}(t) + \nabla f(x(t)) = 0,\quad x(0)=x_0 \in \RR^d\label{example}
\end{equation}
which corresponds to the continuous-time limit of AGM~\cite{SBC14}.
Thus, one cannot determine which of these dynamics is superior solely on the basis of their convergence rates.

This observation highlights the necessity of a tool for selecting a representative rate for each ODE. 
In~\cite{WWJ16}, the authors also emphasized the arbitrariness of convergence rates under time-rescaling, noting that “the underlying solution curve has a more fundamental structure that is worth exploring further.”

In prior studies on convergence rates of ODEs, the ambiguity caused by time-rescaling has often been addressed by fixing the coefficients of the ODEs, thereby restricting the collection of ODEs under comparison. While this approach enables comparison within a given collection of ODEs, it does not allow meaningful comparison across different collections. 
Moreover, even within the same collection, comparing ODEs with fixed coefficients may not yield valid insights.
For instance, consider the family of heavy-ball ODEs~\cite{P63}:
\begin{equation}
       \ddot{x}(t) + a_1(t) \dot{x}(t) + a_2(t) \nabla f(x(t)) = 0, \quad x(0)=x_0 \in \RR^d\label{HBODE}
\end{equation}
where $a_1, a_2 \colon [0, \infty) \to \RR$. An ODE in this family is specified by determining the coefficient functions $(a_1, a_2)$. We define an equivalence relation on these coefficients: $(a_1,a_2) \sim (b_1,b_2)$ if the solution of the ODE with $(a_1,a_2)$ is a time-rescaled one of the solution with $(b_1,b_2)$. This relation partitions the heavy-ball ODEs into equivalence classes.
Our goal is then to compare these classes and identify the one that produces the most efficient solution trajectories, which in turn correspond to efficient discrete-time algorithms.
Since for any $(a_1,a_2)$, there exists a unique $\hat{a}$ such that $(a_1,a_2) \sim (\hat{a},1)$, one can define a “normalized” representative in each class by fixing the coefficient $a_2(t)=1$.
Prior works have compared heavy-ball ODEs under this normalization~\cite{MJ20,ADR22,SRR22,MTB23}. However, there is no justification that comparisons restricted to these representatives reflect the relative efficiency of the underlying equivalence classes, i.e. the solution trajectories.

This ambiguity becomes problematic when the heavy-ball ODE involves a gradient evaluated at a shifted point, which is equivalent to Hessian-driven damping~\cite{SDJS22}.
Consider the following family of ODEs:
\begin{equation}
       \ddot{x}(t) + \frac{3}{t} \dot{x}(t) + \nabla f(x(t) + b\dot{x}(t)) = 0, \quad x(0)=x_0 \in \RR^d\label{ODEshifted}
\end{equation}
where $b > 0$ is a constant. For these ODEs, suppose that we eliminate the time-rescale ambiguity is eliminated by fixing the coefficient in front of the gradient term to 1. It has been shown~\cite{USM24} that, for convex $f$,
\begin{equation}\label{eq:rate:NAGS}
       \min_{0\le s \le t} \norm{\nabla f(x(s) + b\dot{x}(s))}^2 \le \frac{18\norm{x(0)-x^\star}^2}{7bt^3}.
\end{equation}
If we compare these solution trajectories solely based on this convergence rate, we might conclude that larger values of $b$ are better, suggesting an unlimited improvement in the constant factor of the rate.
However, such a phenomenon cannot occur in the discrete-time setting, given the known lower bounds (cf.~\cite{N18b}).
Thus, it is unlikely that a fast algorithm can be developed by discretizing this model.
We therefore need to adopt another approach to fixing the rate (or selecting the representative rate), so as to avoid continuous-time models that yield illusory convergence rates that do not correspond to any feasible algorithm.


A similar issue arises in control theory.
Increasing the gain enables faster convergence to the desired state, which corresponds to a time-rescaling effect in continuous-time models.
In practical control situations, however, excessively large gain leads to higher control cost and greater sensitivity to noise, making arbitrarily large gain impractical.
To balance this trade-off, optimal control strategies, such as the linear quadratic method (cf.~\cite{AM07b}), typically minimize a cost function that accounts for both performance and these side effects.
In this study, we focus on a similar trade-off structure arising from time-rescaling, although our strategy differs from the control-theoretic approach above.

We view optimization methods as numerical solvers for ODEs, where the allowable step size determines the effective convergence rate after discretization.
Our key observation is the following trade-off: in continuous-time models, time-rescaling can speed up convergence but at the same time forces smaller step sizes for stable numerical integration, thereby slowing discrete-time convergence.
Using linear stability analysis, a standard tool from numerical analysis, we show that this trade-off collapses to a single convergence rate, which serves as a representative rate of the ODE, as detailed in \cref{sec:essential}.

Building on this observation, we define the \emph{essential convergence rate} for each equivalence class comprising dynamics that can be transformed into one another via time-rescaling.
We then show that, under suitable assumptions on the numerical scheme, no rate faster than the essential convergence rate is attainable after discretization, indicating that this rate is the appropriate yardstick for assessing the true convergence behavior of continuous-time models.

\paragraph{Comparison with~\cite{USM2022rate}}
This paper extends our previous letter~\cite{USM2022rate}.
The preceding work~\cite{USM2022rate} also defined the essential convergence rate, but only for a fixed objective function and a fixed initial point.
In contrast, the present paper generalizes this notion to the worst-case setting, allowing both the objective function and the initial point to vary within a prescribed set.
Such a formulation provides a more meaningful measure of the efficiency of the dynamics.
Moreover, in~\cite{USM2022rate}, the rate was defined in a low-resolution manner: if the essential convergence rate was linear, then any linear rate of the form $\mathrm{e}^{-a t}$ with arbitrary $a>0$ qualified as the essential convergence rate.
In this paper, we refine the definition to a sufficiently high-resolution form, which enables us to distinguish, for instance, between the linear convergence rates of the ODE models corresponding to the gradient descent and the accelerated gradient method.


\paragraph{Organization}
The paper is organized as follows. Preliminaries from numerical analysis and the formulation of our problem are presented in \cref{sec:preliminary}. The definition of the essential convergence rate and its justification are given in \cref{sec:essential}. Examples illustrating essential convergence rates of certain dynamics are provided in \cref{sec:example}. Further discussion of the implications and limitations of this work is presented in \cref{sec:discussion}, and conclusions are drawn in \cref{sec:conclusions}.

\section{Preliminary}\label{sec:preliminary}
\subsection{Linear stability analysis}
To define the essential convergence rate, we draw on the concept of linear stability analysis of numerical methods for ODEs.
Once optimization methods are regarded as numerical methods of ODEs, the stability of the numerical scheme is a necessary condition for the convergence of optimization methods.

Here, We consider the ODE
\begin{equation}
       \dot{x}(t) = g(x(t),t), \quad x(0) = x_0 \in \RR^d, \label{ode}
\end{equation}
where $g \colon \RR^d \times \RR_{\ge 0} \to \RR^d$ is a vector field.
Let $\delta x(t) \in \RR^d$ denote a perturbation of the solution $x(t)$, and consider its evolution under the linearized equation
\begin{equation}
       \dv{}{t}\delta x(t) = \pdv{g}{x}(x(t),t) \delta x(t), \quad \delta x(0) = \delta x_0 \in \RR^d,
\end{equation}
where $\partial g/\partial x$ denotes the Jacobian of $g$.
When this linearized system is computed by a single step of a numerical method, the perturbation must not grow; otherwise the computation for~\eqref{ode} becomes unstable.
By examining the eigenvalue decomposition of $\partial g/\partial x$, we arrive at the following test equation.

\begin{definition}[Stability domain of a numerical method; cf.~\cite{HW10b}]
       Let $R(\lambda h)$ be the value obtained by applying the numerical method with a step size $h$ to Dahlquist's test equation
       $
              \dot{x} = \lambda x, \ x(0) = 1. \label{dahlquist}
       $ 
       The function $R$ is said to be the {\em stability function} of the method, and the set
       $
             S:= \setI{z \in \CC}{\abs{R(z)} \le 1}
       $
       is called the {\em stability domain} of the method.
\end{definition}

For a given ODE~\cref{ode} and a numerical method, the numerical computation proceeds stably (in the sense of linear stability) if the step sizes $\setE{h_k}$ are controlled so that, for any $k$, the products $h_k\lambda$ lie within the stability domain of the numerical method for all eigenvalues $\lambda$ of $(\partial g/\partial x)$.
Note that a number of numerical methods with various stability domains have been developed to date (cf.~\cite{HW10b}).
To allow for larger step sizes, it is important to employ a numerical method whose stability domain adequately reflects the configuration of the eigenvalues of the target problem. This consideration will also play a role in the optimization context, as will be seen in~\cref{thm}.

Because optimization methods are typically based on explicit schemes, whose stability domains are generally bounded (cf.~\cite{NS1974}), we henceforth restrict our attention to numerical methods with bounded stability domains.

\subsection{Problem setting and notations}
In the following, $\mathcal{F}$ denotes a general collection of objective functions $f \colon \RR^d \to \RR$ and $\mathcal{S}_{\mu,L}$ denotes the set of $\mu$-strongly convex and $L$-smooth functions on $\RR^d$ with respect to the Euclidean norm. For a matrix $A\in \RR^{d\times d}$, $\rho(A)$ denotes its spectral radius.

Let $f \in \mathcal{F}$ and we consider the unconstrained optimization problem
\begin{equation}
       \min_{x \in \RR^d} f(x).
\end{equation}
We assume that an (possibly local) optimal value $f^\star$ and an associated optimal solution $x^\star$ exist.
This paper focuses on ODE models whose solutions converge to the optimal solution, e.g.,~\eqref{example}.
To include second- or higher-order ODEs, we consider the following $d'$-dimensional ($d'=d, 2d, \ldots$) first-order non-autonomous continuous-time system generated by a mapping $g[\cdot] \colon \mathcal{F} \to (\RR^{d'} \times \RR_{\ge 0} \to \RR^{d'})$:
\begin{equation}
       \dot{y} = g[f](y,t),\quad y(0) = y_0\in \mathcal{V}, \label{ode4}
\end{equation}
where the set $\mathcal{V} \subseteq \RR^d$ denotes the set of initial values.
We set  $y_i = x_i$ for $i = 1,\dots,d$, and if $d'> d$ the remaining coordinates $y_{d+1},\dots,y_{d'}$ are interpreted as auxiliary variables introduced as needed.

We accordingly extend the objective function $f$ by defining $\tilde{f}(y) = f(y_1,\dots,y_{d'})$.
For simplicity, we will abuse notation and write $\tilde{f}$ as $f$ again.

Let $\mathcal{G}$ denote the collection of maps $g[\cdot]$'s such that, for any $f \in \mathcal{F}$ and any initial point $y_0 \in \mathcal{V}$, the ODE~\eqref{ode4} is globally well-posed, and its solution $y(t) \in \RR^{d'}$ satisfies $f(y(t)) \to f^\star$ as $t \to \infty$.
For a solution $y(t)$ of~\eqref{ode4}, ${\partial g[f]((y(t),t))}/{\partial y}$ denotes the Jacobian matrix of $g[f]$ with respect the first argument, evaluated at $(y(t),t)$.

In the following, a function $\alpha \colon \RR_{\ge 0} \to \RR_{\ge 0}$ is called a time-rescaling function if it is differentiable, monotonically increasing and satisfies $\alpha(0)=0$ and $\lim_{t\to\infty}\alpha(t)=\infty$.

\section{Essential convergence rate}\label{sec:essential}
We begin with an illustrative example.
Let us consider the one-dimensional gradient flow $\dot{x} = -\lambda x$ for $ \lambda \in [ \mu , L ] $, which corresponds to an eigenvalue component of the gradient flow $\dot{x} = - \nabla f(x) $ on $\RR^d$, where $f$ is a quadratic function belonging to $\mathcal{S}_{\mu, L}$.
For this class of functions, the convergence rate of the optimal gap $f(x(t)) - f^\star$ is $\Order{\e^{-2\mu t}}$ (see~\cref{app:rates_ie}), while the discrete rate by the gradient descent is $\Order{\e^{-4(\mu/(L+\mu))k}}$ (see~\cref{app:rates_ie}; advanced discussions are in~\cite{ADA21}).
In what follows, we consider time-rescaling of this ODE and the resulting convergence rate after discretization.

Suppose we rescale time by linear transformation $t = r\tau$ ($r > 0$), which yields the rescaled ODE
\[
\dot{x} = -r \lambda x.
\]
The convergence rate now becomes $\Order{\e^{-2r\mu \tau}}$, which appears accelerated if $r>1$.
But is this truly superior than the original ODE for designing discrete-time optimization methods?
To examine this, let us apply the explicit Euler method and see what would happen.
Its stability domain is the unit disc centered at $z=-1$ (see, for example,~\cite{HW10b}).
Since $-r \lambda$ (the eigenvalue of the Jacobian in this case) is real, it suffices to consider the real interval $[-2, 0]$ within the stability domain, which requires decreasing step sizes $h_{k} \le 2/(r\lambda)$ for any $\lambda \in [\mu,L]$.
This implies that the accumulated discrete time $\tau_k = \sum_{i=1}^k h_k$ cannot exceed $2k /(rL)$, and that with $\tau = 2k /(rL)$, the attainable discrete rate derived from the rescaled ODE cannot be faster than $\Order{\e^{-4(\mu/L)k}}$.
(This rate is almost exact; see~\cref{app:rates_ie}.)
Thus, the time-rescaling factor $r$ cancels out and has no effect on the expected discrete rate.

Next, let us examine a more aggressive rescalingby $t=\tau^p$ ($p>1$), which yields
\[
\dot{x} = -p\tau^{p-1} \lambda x.
\]
This seems to improve the original continuous-time convergence rate from $\Order{\e^{-2\mu t}}$ to $\Order{\e^{-2\mu \tau^p}}$.
However, for the same reason as above, we are forced to take step sizes $h_k \le 2/(p{\tau_{k-1}}^{p-1} \lambda)$, which implies that the accumulated discrete time becomes $\tau_k = \sum_{i=1}^k h_k \sim (2k/L)^{1/p}$.
(This can be checked by the rough estimate $\sum_{j=1}^k 1/j^{(p-1)/p} \sim \int_1^k 1/{x^{(p-1)/p}}{\rm d}x \sim k^{1/p}$.) As a result, the apparent acceleration is canceled out, and we come back to the rate $\Order{\e^{-4 (\mu/L) k}}$.
This example also shows that time-rescaling does not essentially improve the order of the convergence.

Summing up, these toy problems show that {\em time-rescaling is meaningless once stable discretizations are taken into account}.
This immediately raises another question: can we somehow appropriately define a representative rate for an ODE that is useful for comparing different ODEs?

The linear case above provides an insight.
Recall that the coefficient of the discrete rate $4\mu/L$ came from $2r\mu \cdot 2/(rL)$, which consists of the three elements: (i) the continuous rate coefficient $2r\mu$, (ii) the ODE coefficient $rL$, and (iii) the radius of the stability domain of the explicit Euler method, namely $2$.
The third element solely depends on the choice of numerical method and has therefore no relation to the ODE.

Thus, the ratio (i)/(ii)${}=2r\mu/(rL)=2(\mu/L)$ and its associated rate $\Order{\e^{-2(\mu/L)t}}$ can be regarded as intrinsic to the ODE.
This rate corresponds to that of the ODE $\dot{x} = -\frac{\lambda}{L}x$, where the time scale is chosen so that the absolute value of the coefficient of the vector field is at most $1$ (i.e., we take $r=1/L$).
This observation suggests that {\em it can be meaningful to choose a time scale in which the asymptotic spectral radius of the Jacobian of the vector field (equivalently, the Hessian of the objective function) is at most $1$, and to regard the corresponding rate as the representative rate.}
 
We now rephrase the above procedure in more general setting to establish our claim with a mathematical rigor.
Consider a time-rescaling of ODE~\eqref{ode4}.
With the time-rescaling $t=\alpha(\tau)$, ODE~\eqref{ode4} becomes
\begin{equation}
       \dv{}{t}y(\alpha(t)) = \dot{\alpha}(t)g[f](y(\alpha(t)),\alpha(t)), \label{ode4:trans}
\end{equation}
where, for simplicity, we again denote $\tau$ by $t$.
We next apply numerical methods to these ODEs.
As stated previously, only numerical methods with bounded stability domains will be considered. This implies that, in~\eqref{ode4:trans}, all the eigenvalues of $h_k\dot{\alpha}(t)(\partial g[f]/\partial y)$ must remain within the stability domain. Therefore, if for example $\dot{\alpha}(t)\rho(\partial g[f]/\partial y)\to\infty$ as $t\to\infty$, we are forced to take decreasing step sizes $h_k\to 0$. In such cases, overall efficiency may not improve, as illustrated in the example above.
This observation motivates the following equivalence relation.
\begin{definition}
       For $g_1,g_2 \in \mathcal{G}$, the symbol $g_1 \sim g_2$ means that there exists a time-rescaling function $\alpha$ satisfying $g_1[f](y,t) = \dot{\alpha}(t) g_2[f](y,\alpha(t))$ 
       for any $t \in \RR_{> 0}$, $y \in \RR^d$, $f \in \mathcal{F}$.
       The symbol defines an equivalence relation on $\mathcal{G}$, and we denote the equivalence class for $g \in \mathcal{G}$ by $[g]$.
\end{definition}

As seen in the previous example, 
within an equivalence class $[g]$, 
it is essential to consider the representative for which $\dot{\alpha}(t)\rho(\partial g[f]/\partial y)=\Theta(1)$, so that simple fixed time-stepping schemes can be employed and the order of the continuous-time rate (e.g., $\Order{1/t}$ for gradient flow on convex functions) naturally coincides with that of the discrete method (e.g., $\Order{1/k}$).
This motivates the following concept.

In what follows, we assume $g$ and its equivalent class $[g]$ are given.
For a function $f \in \mathcal{F}$ and an initial value $y_0 \in \mathcal{V}$, 
we denote by $y_{g[f], y_0}(t)$ the solution of $\dot{y} = g[f](y,t)$ with $y(0) = y_0$.

\begin{definition}[$c$-essential dynamics]\label{def:ed}
    Suppose $g\in[g]$, $f \in \mathcal{F}$, and $y_0 \in \mathcal{V}$ are given.
       Let
       \begin{equation}
              \mathcal{L}_g \coloneqq \setI{(f,y_0) \in \mathcal{F} \times \mathcal{V}}{\lim_{t\to\infty} \rho\left({\partial g[f](y_{g[f],y_0}(t),t)}/{\partial y} \right) \text{ exists}}.
       \end{equation}
    Then, $g$ is said to be {\em $c$-essential} if it satisfies $\mathcal{L}_g \neq \emptyset$ and
       \begin{align} \label{cond:sup}
              c & = \sup_{ (f,y_0) \in \mathcal{F} \times \mathcal{V}} \limsup_{t\to\infty}\rho\left({\partial g[f](y_{g[f],y_0}(t),t)}/{\partial y} \right) \\
              & = \sup_{(f,y_0)\in \mathcal{L}_g} \lim_{t\to \infty} \rho\left({\partial g[f](y_{g[f],y_0}(t),t)}/{\partial y} \right)
       \end{align}
       holds for some $c >0$.
\end{definition}
\begin{remark}\label{rem:ed}
    Note that condition~\eqref{cond:sup} is trivial if the second equality is replaced by $\ge$ (since $\mathcal{F}\times\mathcal{V} \supseteq \mathcal{L}_g $). In this sense, what we actually assume is that equality holds there.
    All of the practical examples (ODEs) in~\cref{sec:example} satisfy 
    $\mathcal{L}_g = \mathcal{F}\times\mathcal{V}$, and the present authors are not certain whether considering the definition in the more general setting (possibly $\mathcal{F}\times\mathcal{V} \nsubseteq \mathcal{L}_g$) has any practical relevance.
    We nevertheless adopt this definition in order to keep the assumptions as weak as possible so that the subsequent discussion remains valid.
\end{remark}

Next, we define the \emph{essential convergence rate} based on $c$-essential dynamics. 
Let $\Phi[f, y](t)$ denote a metric for convergence, such as $f(y(t)) - f^\star$, $\norm{x(t) - x^\star}$ or $\norm{\nabla f(y(t))}$. When the ODE is extended to a $d'$-dimensional system, $x$ in $\norm{x(t) - x^\star}$ refers to the original solution vector $x_i = y_i$ ($i=1,\ldots,d$)).

As is well known, the convergence of optimization method can vary even for the same method, depending on objective functions and initial values.
Accordingly, it is common to evaluate the worst-case convergence rate over these variations.
In the same spirit, we take an analogous approach for continuous-time models: we define the essential convergence rate in terms of the worst-case rates.

\begin{definition}[Essential convergence rate] \label{def:ec}
       Suppose there exists a $c$-essential dynamics $g_c\in[g]$.
       We call the function $\beta_c $ \emph{the worst-case rate of $ g_c $ with respect to $\Phi$ in $\mathcal{F}\times \mathcal{V}$} if $ \Phi[f,y_{ g_c[f],y_0 }] (t) \le \beta_c (t) $ holds for all $ (f,y_0) \in \mathcal{F} \times \mathcal{V} $ and sufficiently large $ t \in \RR_{>0} $.        
       Then {\em the essential convergence rate} of $\Phi$ for $[g]$ 
       is defined by $\beta_c(t/c)$.
\end{definition}

Note that the essential convergence rate is not unique, since in \cref{def:ec}, any upper bound of $ \Phi[f,y_{ g[f],y_0 }] (t) $ can serve as an essential convergence rates.
This non-uniqueness is inevitable, because in continuous-time models, no rate lower-bounds are currently known, and thus there is no way to distinguish a {\em tight} rate.
In this context, our claim is that the essential rates are unaffected by time-rescaling, regardless of whether they are tight or not.
This parallels the situation in discrete-time optimization methods, where any upper bound is referred to as the convergence rate, whether or not it is tight.

The following proposition addresses potential concerns regarding the definition of the essential convergence rate: (i) even when there are multiple $c$-essential dynamics in $[g]$, the essential convergence rate $\beta_c(t/c)$ is well-defined; and (ii) although there is some freedom in the choice of $c$, this does not essentially affect the essential convergence rate.

\begin{proposition}\label{prop:welldef}
    Let $g_i$ $(i=1,2)$ be $c_i$-essential dynamics in $[g]$, and $y_{g_i[f],y_0}$ be associated solutions.
    Let $\beta_{c_i}(t/c_i)$ $(i=1,2)$ be their essential convergence rates in the sense of~\cref{def:ec}.
    Then, for any $a<1$, $\beta_{c_1}(t/c_1) = \Order{\beta_{c_2}(at/c_2)}$ and  $\beta_{c_2}(t/c_2) = \Order{\beta_{c_1}(at/c_1)}$ hold.
\end{proposition}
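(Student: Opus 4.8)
The plan is to work directly with the time-rescaling function connecting the two dynamics. Since $g_1,g_2\in[g]$, there is a time-rescaling function $\alpha$ with $g_1[f](y,t)=\dot\alpha(t)\,g_2[f](y,\alpha(t))$ for all $f,y,t$; crucially $\alpha$ does \emph{not} depend on $(f,y_0)$. A direct chain-rule computation shows that $t\mapsto y_{g_2[f],y_0}(\alpha(t))$ solves $\dot y=g_1[f](y,t)$ with the same initial value, so by well-posedness $y_{g_1[f],y_0}(t)=y_{g_2[f],y_0}(\alpha(t))$. Consequently $\Phi[f,y_{g_1[f],y_0}](t)=\Phi[f,y_{g_2[f],y_0}](\alpha(t))$ for every admissible metric $\Phi$ (all such metrics depend only on the current trajectory point), and, taking Jacobians, $\rho\!\left(\partial g_1[f](y_{g_1[f],y_0}(t),t)/\partial y\right)=\dot\alpha(t)\,\rho\!\left(\partial g_2[f](y_{g_2[f],y_0}(\alpha(t)),\alpha(t))/\partial y\right)$.

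The heart of the proof is the claim $\lim_{t\to\infty}\alpha(t)/t=c_1/c_2$. Write $\rho_i(t;f,y_0)$ for the spectral radius above along the $g_i$-trajectory, so $\rho_1(t;f,y_0)=\dot\alpha(t)\,\rho_2(\alpha(t);f,y_0)$. Since $\alpha$ is an increasing bijection of $[0,\infty)$, $\limsup_{t}\rho_2(\alpha(t);f,y_0)=\limsup_{s}\rho_2(s;f,y_0)$, so subadditivity of $\limsup$ for nonnegative functions and the first equality in \cref{def:ed} (for $g_2$) give $c_1=\sup_{(f,y_0)}\limsup_t\rho_1(t;f,y_0)\le(\limsup_t\dot\alpha(t))\,c_2$, i.e.\ $\limsup_t\dot\alpha(t)\ge c_1/c_2$. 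For the reverse bound, use the second equality in \cref{def:ed}: pick $(f_n,y_n)\in\mathcal L_{g_2}$ with $\ell_n:=\lim_s\rho_2(s;f_n,y_n)\to c_2>0$, so $\ell_n>0$ for large $n$; along such a pair $\rho_2(\alpha(t);f_n,y_n)\to\ell_n$, whence $\limsup_t\dot\alpha(t)\rho_2(\alpha(t);f_n,y_n)=\ell_n\limsup_t\dot\alpha(t)$, which is $\le c_1$ by the (trivial) first equality in \cref{def:ed} applied to $g_1$; letting $n\to\infty$ gives $\limsup_t\dot\alpha(t)\le c_1/c_2$. Thus $\limsup_t\dot\alpha(t)=c_1/c_2$. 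Running the entire argument for the inverse rescaling $\gamma:=\alpha^{-1}$ (which connects $g_2\in[g]$ to $g_1\in[g]$ and satisfies $\dot\gamma=1/(\dot\alpha\circ\gamma)$) gives $\limsup_s\dot\gamma(s)=c_2/c_1$, hence $\liminf_t\dot\alpha(t)=c_1/c_2$. Therefore $\dot\alpha(t)\to c_1/c_2$, and integrating, $\alpha(t)/t\to c_1/c_2$; likewise $\gamma(t)/t\to c_2/c_1$.

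It remains to compare the rates. From $\Phi[f,y_{g_1[f],y_0}](t)=\Phi[f,y_{g_2[f],y_0}](\alpha(t))\le\beta_{c_2}(\alpha(t))$ for all $(f,y_0)$ and large $t$, the map $t\mapsto\beta_{c_2}(\alpha(t))$ is a worst-case rate of $g_1$; taking the worst-case rates to be non-increasing (e.g.\ the pointwise suprema $\sup_{(f,y_0)}\Phi[f,y_{g_i[f],y_0}](\cdot)$, which is what the comparison is really about) gives $\beta_{c_1}(t)\le\beta_{c_2}(\alpha(t))$ for large $t$. Hence $\beta_{c_1}(t/c_1)\le\beta_{c_2}(\alpha(t/c_1))$, and since $\alpha(t/c_1)/t\to 1/c_2$ and $a<1$, we have $\alpha(t/c_1)\ge at/c_2$ for large $t$, so monotonicity of $\beta_{c_2}$ yields $\beta_{c_1}(t/c_1)\le\beta_{c_2}(at/c_2)$, i.e.\ $\beta_{c_1}(t/c_1)=\Order{\beta_{c_2}(at/c_2)}$. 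The reverse estimate is the symmetric argument with $\gamma$ in place of $\alpha$, using $\gamma(t/c_2)/t\to 1/c_1$.

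The main obstacle is the spectral-radius step: one must squeeze $\lim_t\dot\alpha(t)$ to $c_1/c_2$ by playing the two clauses of \cref{def:ed} against each other — the trivial $\limsup$ inequality bounds $\dot\alpha$ from above only after securing that $\rho_2$ stays bounded away from $0$ along a near-extremal $(f,y_0)$ (this is where $c_2>0$ and $\mathcal L_{g_2}\neq\emptyset$ are used), while the equality clause together with $\limsup$-subadditivity bounds it from below — and then recovering $\liminf$ by invoking the whole scheme for $\gamma$. A secondary point, easily dispatched, is that the comparison is only meaningful for non-increasing (or pointwise-tightest) choices of $\beta_{c_i}$; once one passes to such a choice the final two estimates are routine.
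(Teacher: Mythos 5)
Your proof is correct and follows essentially the same route as the paper: both reduce the problem to showing that $\lim_{t\to\infty}\dot\alpha(t)$ equals the ratio of the $c_i$'s by playing the two clauses of \cref{def:ed} against each other (you via a near-extremal sequence in $\mathcal{L}_{g_2}$ together with the inverse rescaling $\gamma=\alpha^{-1}$, the paper via a direct interchange of $\limsup$ and $\sup$ over $\mathcal{L}_{g_i}$), and then compare the $\beta_{c_i}$ using monotonicity and $\alpha(t)\ge at\cdot(\text{ratio})$. Your explicit remark that the final comparison is only meaningful once the $\beta_{c_i}$ are taken non-increasing and pointwise-tightest makes precise an identification ($\beta_{c_2}=\beta_{c_1}\circ\alpha$, with $\beta_{c_1}$ decreasing) that the paper's proof asserts without comment.
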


\begin{proof}
    Note that $y_{g_2[f],y_0}(t)$ can be expressed as $y_{g_1[f],y_0}(\alpha(t))$ for some time-rescaling function $\alpha(t)$.
    Since $g_i$ is $c_i$-essential, we have
    \begin{align}
       c_2 &= \sup_{ (f,y_0) \in \mathcal{F} \times \mathcal{V}} \limsup_{t\to\infty}\rho\left({\partial g_2[f](y_{g_2[f],y_0}(t),t)}/{\partial y} \right)\\
       &= \sup_{ (f,y_0) \in \mathcal{F} \times \mathcal{V} } \limsup_{t\to\infty} \dot{\alpha}(t) \rho\left({\partial g_1[f](y_{g_1[f],y_0}(\alpha(t)),\alpha(t))}/{\partial y} \right)\\
       &\ge \sup_{(f,y_0) \in \mathcal{L}_{g_1}} \limsup_{t\to\infty} \dot{\alpha}(t) \rho\left({\partial g_1[f](y_{g_1[f],y_0}(\alpha(t)),\alpha(t))}/{\partial y} \right)\\
       &= \limsup_{t\to\infty} \dot{\alpha}(t) \sup_{(f,y_0) \in \mathcal{L}_{g_1}} \lim_{t\to\infty} \rho\left({\partial g_1[f](y_{g_1[f],y_0}(t),t)}/{\partial y} \right)\\
       &= \limsup_{t\to\infty} \dot{\alpha}(t) c_1,
    \end{align}
    where the fourth line follows from the following equality: for real-value functions $u(t)$ and $v(t)$, if $\lim_{t\to\infty} v(t) = c > 0$, it holds that
    \begin{equation}
        \limsup_{t\to\infty} u(t)v(t) = (\limsup_{t\to \infty} u(t))c,
    \end{equation}
    allowing for both sides being $\infty$. 
    
    Similarly,
    \begin{align}
       c_1 &= \sup_{ (f,y_0) \in \mathcal{F} \times \mathcal{V}} \limsup_{\alpha(t)\to\infty}\rho\left({\partial g_1[f](y_{g_1[f],y_0}(\alpha(t)),\alpha(t))}/{\partial y} \right)\\
       &= \sup_{ (f,y_0) \in \mathcal{F} \times \mathcal{V}} \limsup_{t\to\infty} \frac{1}{\dot{\alpha}(t)} \rho\left({\partial g_2[f](y_{g_2[f],y_0}(t),t)}/{\partial y} \right)\\
       &\ge \sup_{(f,y_0) \in \mathcal{L}_{g_2}} \limsup_{t\to\infty} \frac{1}{\dot{\alpha}(t)} \rho\left({\partial g_2[f](y_{g_2[f],y_0}(t),t)}/{\partial y} \right)\\
       &= \limsup_{t\to\infty} \frac{1}{\dot{\alpha}(t)} \sup_{(f,y_0) \in \mathcal{L}_{g_2}} \lim_{t\to\infty} \rho\left({\partial g_2[f](y_{g_2[f],y_0}(t),t)}/{\partial y} \right)\\
       &= \frac{1}{\liminf_{t\to\infty} \dot{\alpha}(t)} c_2
    \end{align}
    holds. 
    Therefore, we have $c_2/c_1 \ge \limsup_{t\to\infty} \dot{\alpha}(t) \ge \liminf_{t\to\infty} \dot{\alpha}(t) \ge c_2/c_1$,
    which implies $\lim_{t\to\infty} \dot{\alpha}(t) = c_2/c_1$.
    Hence, for any $a <1$, $a\frac{c_2}{c_1}t \le \alpha(t)$ holds for sufficiently large $t$.
    Since $\beta_{c_2}(t) = \beta_{c_1}(\alpha(t))$, and since $\beta_{c_1}$ is decreasing, it follows that
    \begin{equation}
        \frac{\beta_{c_2}(t/c_2)}{\beta_{c_1}(at/c_1)} = \frac{\beta_{c_1}(\alpha(t/c_2))}{\beta_{c_1}(at/c_1)} = \Order{1}.
    \end{equation}
    The other inequality is shown in the same manner.
\end{proof}
\begin{remark}
       By the proposition, if the essential convergence rate is (inverse) polynomial $\Theta(1/t^p)$, then $p$ is uniquely determined.
       If the rate is exponential $\Theta(\e^{-q t})$, then $q$ is also unique.
       However, since $a < 1$ in the theorem, certain cases cannot be distinguished, although the indistinguishable differences are small relative to the essential convergence rates. For example, both $\e^{-t}$ and $\e^{-t}/t$ can be essential convergence rates for the same ODE simultaneously (such rates appear, for instance, in~\cite{LSY24}).
       
       Unfortunately, the condition $a < 1$ cannot be removed, i.e., we cannot claim $\beta_{c_2}(t/c_2) = \Order{\beta_{c_1}(t/c_1)}$ in \cref{prop:welldef}. 
       For example, $ \alpha (t) = (c_2/c_1) ( t - \log (t+1))$ is a time-rescaling function with $ \lim_{ t \to \infty} \dot{\alpha} (t) = c_2 / c_1 $. If we consider the case $ \beta_{c_1} (t) = \e^{-c_1 t }$ and $ \beta_{c_2} (t) = \beta_{c_1} (\alpha(t))$ (see the initial sentence of the proof), then 
       \[ \frac{\beta_{c_2}(t/c_2)}{\beta_{c_1}(t/c_1)} = \frac{\beta_{c_1}(\alpha(t/c_2))}{\e^{-t}} = \e^{ c_2 \log ( t/c_2 + 1) } = \paren*{ \frac{t}{c_2} + 1 }^{c_2} \to \infty, \]
       which implies $ \beta_{c_2} (t/c_2) \neq \Order{ \beta_{c_1} (t/c_1) }. $
\end{remark}


Now we show that convergence rates essentially cannot exceed the essential one in \cref{def:ec} by time-rescaling if discretization is taken into account. 

The setting for the theorem is as follows.
We consider the ODE~\eqref{ode4} for a dynamics $g\in \mathcal{G}$, with the objective function $f$ varying in $\mathcal{F}$.
The step sizes are denoted by $h_k$, and the accumulated time by $t_k \coloneqq \sum_{i=1}^{k} h_k$ (with $ t_0 = 0 $). 
We denote the stability domain of a given fixed numerical method by $S$, and write its radius as $r:=\max_{z\in S}|z|$.

\begin{theorem}\label{thm}
       Assume the following.

       (Assumption on the dynamics) There exists a $1$-essential dynamics $g_1$ in $[g]$.
       We then consider the rescaled ODE~\eqref{ode4:trans}, and write the right-hand side as $\hat{g}[f](y,t) \coloneqq \dot{\alpha}(t)g_1[f](y,\alpha(t))$ with a time-rescaling function $ \alpha $.
        Its solution, with the initial data $y_0$  varying in $\mathcal{V}$, is again denoted by $y_{\hat{g}[f],y_0}(t)$.

       (Assumption on the time-rescaling function) $\dot{\alpha}$ is monotonic.

       (Assumptions on the applied numerical method) 
       We use the same numerical method throughout the integration.
       We denote the associated numerical solution by $\setE{y_{\hat{g}[f],y_0}^{(k)}}_k$.
       The method and its step-size control satisfy the following.
       \begin{enumerate}[label=\rmfamily(\roman*)]
              \item The stability domain $S$ of the numerical method is bounded (i.e., $r< + \infty$) and static (i.e., it does not change with time).
              \label{asm1}
              \item For any $f \in \mathcal{F}$ and $y_0 \in \mathcal{V}$, 
              $h_k$ is controlled so that
              all the eigenvalues of $h_k \pdv{\hat{g}[f]}{y}\,\!(y_{\hat{g}[f],y_0}^{(k-1)},t_{k-1})$ lie in the stability domain, 
              and $ t_k \to \infty $ as $ k \to \infty $.
              \label{asm3}
              \item For any $\varepsilon_0 > 0$, there exist $\tilde{f} \in \mathcal{F}$, $\tilde{y}_0 \in \mathcal{V}$ and $k_0 >0$ such that for all $k \ge k_0$,
              \begin{equation}
                     \rho\paren*{\pdv{g_1[\tilde{f}]}{y}(y_{\hat{g}[\tilde{f}],\tilde{y}_0}^{(k)},\alpha(t_k))} \ge 1-\varepsilon_0.
              \end{equation}\label{asm5} 
       \end{enumerate}
       
       Then, for any $\varepsilon > 0$, there exists an infinite subsequence of time $\setE{t_{\ell_j}}_j$ such that
       \begin{equation}
              \alpha(t_{\ell_j}) \le r \ell_j + \varepsilon \ell_j 
       \end{equation}
       holds.
\end{theorem}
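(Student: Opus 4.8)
The plan is to control, step by step, how fast the rescaled time $\alpha(t_k)$ can advance, using only the linear–stability restriction on the step sizes. Differentiating $\hat g[f](y,t)=\dot\alpha(t)g_1[f](y,\alpha(t))$ in $y$ gives $\pdv{\hat g[f]}{y}(y,t)=\dot\alpha(t)\,\pdv{g_1[f]}{y}(y,\alpha(t))$, so the eigenvalues of $h_k\pdv{\hat g[f]}{y}(y_{\hat g[f],y_0}^{(k-1)},t_{k-1})$ are exactly those of $\pdv{g_1[f]}{y}(y_{\hat g[f],y_0}^{(k-1)},\alpha(t_{k-1}))$ scaled by $h_k\dot\alpha(t_{k-1})$. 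By \ref{asm1} and \ref{asm3} they lie in $S$, hence have modulus at most $r$, i.e.
\[
 h_k\,\dot\alpha(t_{k-1})\,\rho\!\left(\pdv{g_1[f]}{y}\!\left(y_{\hat g[f],y_0}^{(k-1)},\alpha(t_{k-1})\right)\right)\le r .
\]
First I would fix the data: given $\varepsilon>0$, choose $\varepsilon_0\in(0,1)$ so small that $C:=r/(1-\varepsilon_0)<r+\varepsilon$, apply \ref{asm5} with this $\varepsilon_0$ to obtain $\tilde f\in\mathcal F$, $\tilde y_0\in\mathcal V$, $k_0$, and from now on work with the single numerical trajectory $y^{(k)}:=y_{\hat g[\tilde f],\tilde y_0}^{(k)}$ and its accumulated times $t_k$. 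Feeding $(\tilde f,\tilde y_0)$ into the displayed inequality and using the lower bound $\rho(\pdv{g_1[\tilde f]}{y}(y^{(k)},\alpha(t_k)))\ge 1-\varepsilon_0$ from \ref{asm5} gives the basic step-size estimate $h_k\,\dot\alpha(t_{k-1})\le C$ for $k\ge k_0+1$, and by \ref{asm3} one also has $\sum_{k\ge k_0+1}h_k=\lim_k t_k-t_{k_0}=\infty$.

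The remaining work is an elementary estimate of $\alpha(t_\ell)=\alpha(t_{k_0})+\sum_{k=k_0+1}^{\ell}\int_{t_{k-1}}^{t_k}\dot\alpha(s)\,\dd s$, split according to the monotonicity of $\dot\alpha$. If $\dot\alpha$ is non-increasing, then $\int_{t_{k-1}}^{t_k}\dot\alpha(s)\,\dd s\le h_k\dot\alpha(t_{k-1})\le C$, so $\alpha(t_\ell)\le\alpha(t_{k_0})+C(\ell-k_0)\le(r+\varepsilon)\ell$ as soon as $\alpha(t_{k_0})-Ck_0\le(r+\varepsilon-C)\ell$; in particular the required subsequence exists (indeed the bound then holds for all large $\ell$). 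If $\dot\alpha$ is non-decreasing, then $\int_{t_{k-1}}^{t_k}\dot\alpha(s)\,\dd s\le h_k\dot\alpha(t_k)=h_k\dot\alpha(t_{k-1})+h_k\bigl(\dot\alpha(t_k)-\dot\alpha(t_{k-1})\bigr)\le C+\tfrac{C}{\dot\alpha(t_{k-1})}\bigl(\dot\alpha(t_k)-\dot\alpha(t_{k-1})\bigr)=C\mu_k$, where $\mu_k:=\dot\alpha(t_k)/\dot\alpha(t_{k-1})\ge 1$; hence $\alpha(t_\ell)\le\alpha(t_{k_0})+C\sum_{k=k_0+1}^{\ell}\mu_k$, and it suffices to prove $\liminf_{\ell\to\infty}\tfrac1\ell\sum_{k=k_0+1}^{\ell}(\mu_k-1)=0$, whereupon taking $\ell$ along that sequence, and $\delta,\varepsilon_0$ small enough that $C(1+\delta)\le r+\varepsilon$, yields $\alpha(t_\ell)\le(r+\varepsilon)\ell$.

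This last $\liminf$ identity is the crux and the expected main obstacle. The only information available is $\mu_k\ge1$, $\prod_{k=k_0+1}^{m}\mu_k=\dot\alpha(t_m)/\dot\alpha(t_{k_0})$, and $\sum_{m\ge k_0}1/\dot\alpha(t_m)=\infty$ (the last from $h_{m+1}\le C/\dot\alpha(t_m)$ together with $\sum h_m=\infty$). The idea is that a persistent accumulation $\sum_{k\le\ell}(\mu_k-1)\gtrsim\ell$ would force the partial products $\dot\alpha(t_m)/\dot\alpha(t_{k_0})$ to grow too fast to be compatible with $\sum 1/\dot\alpha(t_m)=\infty$: when the $\mu_k$ stay bounded this growth is exponential (since $\log(1+x)$ and $x$ are comparable on bounded sets, e.g.\ via $\log(1+x)\ge\tfrac12\min(x,1)$), and a few very large $\mu_k$ — spikes of $\dot\alpha$ — cannot save the situation, because each such spike immediately forces the subsequent step sizes, hence the subsequent ratios $\mu$, to be tiny, keeping the products at least superlinear; one formalizes this by separating $\{\mu_k\le2\}$ (handled by the inequality above) from $\{\mu_k>2\}$ (whose cardinality is at most $\log_2$ of the partial product) and bounding $\dot\alpha(t_m)$ from below in each regime. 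This is also exactly the place where one can only hope for a subsequence, not a cofinite set of indices — the good $\ell$'s are those just before a spike of $\dot\alpha$, where $\alpha(t_\ell)$ has not yet inflated — and where the hypothesis $t_k\to\infty$ enters essentially.
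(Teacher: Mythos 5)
Your proposal follows essentially the same route as the paper's proof: the same derivation of the step-size bound $h_k\dot\alpha(t_{k-1})\le r/(1-\varepsilon_0)$ from assumptions (i)--(iii), the same case split on the monotonicity of $\dot\alpha$, the same reduction of the increasing case to controlling the ratios $\mu_k=\dot\alpha(t_k)/\dot\alpha(t_{k-1})$, and the same key input $\sum_m 1/\dot\alpha(t_m)=\infty$ obtained from $t_k\to\infty$. However, the decisive step --- your claim that $\liminf_{\ell}\frac1\ell\sum_{k\le\ell}(\mu_k-1)=0$ --- is exactly where the paper invests all of its technical effort (extracting the subsequence of indices where $\mu_k$ exceeds a threshold $\gamma>1$ and applying the Silverman--Toeplitz theorem to the resulting weighted averages), and you leave it as a sketch. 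The sketch is plausible and I believe the claim is true, but as written it contains a confusion: after a spike you say ``the subsequent ratios $\mu$ [are forced] to be tiny,'' which cannot happen, since $\mu_k\ge1$ throughout the increasing case; what a spike actually forces is that $1/\dot\alpha(t_m)$, hence the subsequent step sizes, become small, so that the tail of $\sum 1/\dot\alpha(t_m)$ after a large partial product is tightly constrained. Making the ``$\{\mu_k\le 2\}$ versus $\{\mu_k>2\}$'' dichotomy rigorous requires handling the interaction between the two regimes over all $\ell$ simultaneously (both partial sums are monotone, but which one dominates can alternate), and this bookkeeping is precisely the content of the paper's spike-subsequence argument. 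So the proposal is the right strategy with a genuine gap at its crux rather than a complete proof.
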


Before presenting the proof, we offer an intuitive interpretation of the theorem.
Note that $t_{\ell_j}$ represents the (discrete) elapsed time in the time scale of $\hat{g}$, whereas $\alpha(t_{\ell_j})$ denotes the elapsed time in the scale of $g_1$. 
The claim that the latter equals $r\ell_j $ (up to an arbitrarily small $\varepsilon \ell_j$) implies that, however fast the rate may appear in the ``(hopefully) accelerated'' ODE $\dot{y} = \hat{g}[f](y,t)$, it is actually no faster than a fixed step-size integration with step size $r$ of the $g_1$ ODE, the $1$-essential dynamics.
In this sense, \cref{thm} tells us that time-rescaling in continuous-time model is not a magic trick in this general setting---the best we can expect is the constant times speeding-up, namely $r$, which comes from the size of the stability domain $S$.


The linear speedup can be significant when the original convergence rate is of the form $\Order{\e^{-qt}}$ for some $q\in \RR_{>0}$.
In this case, the predicted rate $\Order{\e^{-rqk}}$ is strictly faster when $r>1$.
This is interesting, as it suggests a way to speed up the discrete convergence rate, genuinely in the discretization stage.
More precisely, if we can choose or design a good numerical method such that all the (possibly time-varying) eigenvalues of the Jacobian matrix lie along the direction of the radius of the stability domain, we can expect $r$-times acceleration.
However, it should be noted that for extremely large time steps $h_k>1$, the numerical method may no longer approximate the true solution well, and in such cases there is little hope that the discrete solutions will inherit the continuous convergence rate.

\begin{remark}
        Among the assumptions of \cref{thm}, condition~\ref{asm5} may appear technical, but it can be interpreted as requiring that the asymptotic behavior of the numerical solution is reasonable. 
        Indeed, condition~\ref{asm5} not only appears to be the discrete-time counterpart of \cref{def:ec}, but also follows from the assumption that $g_1$ is $1$-essential together with the following additional assumptions:
        for any $\varepsilon_0 > 0$, there exist $\tilde{f} \in \mathcal{F}$, $\tilde{y}_0 \in \mathcal{V}$ and $k_0 >0$ such that for all $k \ge k_0$,
       \begin{equation}
              \rho\paren*{\pdv{g_1[\tilde{f}]}{y}(y_{\hat{g}[\tilde{f}],\tilde{y}_0}(t_k),\alpha(t_k))} \ge 1 - \varepsilon_0, \label{asm5-1}
       \end{equation}
       and
       \begin{equation}
              \rho\paren*{\pdv{g_1[\tilde{f}]}{y}(y_{\hat{g}[\tilde{f}],\tilde{y}_0}(t_k),\alpha(t_k))} - \rho\paren*{\pdv{g_1[\tilde{f}]}{y}(y_{\hat{g}[\tilde{f}],\tilde{y}_0}^{(k)},\alpha(t_k))} \le \varepsilon_0. \label{asm5-2}
       \end{equation}
       
       The existence of $ (\tilde{f}, \tilde{y}_0, k_0) $ satisfying only \eqref{asm5-1} follows directly from the assumption that $g_1$ is $1$-essential, since
       \begin{align}
       1&= \sup_{ (f,y_0) \in \mathcal{F} \times \mathcal{V} } \limsup_{t \to \infty} \rho\paren*{\pdv{g_1[f]}{y}(y_{g_1[f],y_0}(\alpha(t)),\alpha(t))}\\
       &= \sup_{ (f,y_0) \in \mathcal{F} \times \mathcal{V} } \limsup_{t \to \infty} \rho\paren*{\pdv{g_1[f]}{y}(y_{\hat{g}[f],y_0}(t),\alpha(t))}.
       \end{align}
       
       It is nontrivial that the same triplet $ (\tilde{f}, \tilde{y}_0, k_0) $ also satisfies \eqref{asm5-2}, but this holds, for example, when the continuous- and discrete-time solutions converge to the same point. 
       Unfortunately, however, these two solutions may converge to different points when, for instance, there are multiple (local) optima.
       To cover such cases, we adopt the weaker assumption---one that is satisfied by a wide range of known ODE models, including all those presented in \cref{sec:example}.
\end{remark}

In the proof of~\cref{thm}, we use the following lemma.
\begin{lemma}[Silverman--Toeplitz theorem (cf.~\cite{H49b})]\label{lem}
       Define the transformation of sequences $\bm{\mu} \colon \setE{s_n}_{n=0}^\infty \mapsto \setE{t_n}_{n=0}^\infty$ by $t_n = \sum_{k=0}^{n} \mu_{nk} s_k$ where $\mu_{nk}\in\RR$. For any convergent sequence $\setE{s_n}$, the transformation $\bm{\mu}$ preserves the limit if and only if the following three conditions hold:
       \begin{enumerate}[label=\rmfamily(\alph*)]
              \item there exists $M > 0$ such that $\sum_{k=0}^{n} \abs{\mu_{nk}} \le M$ for all $n$,
              \item $\lim_{n \to \infty} \sum_{k=0}^{n} \mu_{nk} = 1$,
              \item $\lim_{n \to \infty} \mu_{nk} = 0$ for any fixed $k$.
       \end{enumerate}
\end{lemma}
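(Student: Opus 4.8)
The statement is the classical Silverman–Toeplitz regularity theorem, which I would prove as two separate implications. Throughout, it is worth noting at the outset that for each $n$ the sum $t_n = \sum_{k=0}^{n}\mu_{nk}s_k$ is finite (the array is triangular), so $\bm{\mu}$ is well defined on every sequence and no convergence issue arises in the definition itself.

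\textbf{Sufficiency} (conditions (a)–(c) imply the limit is preserved). Suppose $s_n \to s$. I would write, for each $n$,
\[
t_n - s = \sum_{k=0}^{n} \mu_{nk}(s_k - s) + s\left(\sum_{k=0}^{n}\mu_{nk} - 1\right).
\]
The second term tends to $0$ by (b). For the first, fix $\varepsilon > 0$ and choose $K$ with $\abs{s_k - s} < \varepsilon$ for all $k > K$, then split the sum at $K$: the tail is bounded by $\varepsilon \sum_{k=0}^{n}\abs{\mu_{nk}} \le \varepsilon M$ using (a), while the head $\sum_{k=0}^{K}\abs{\mu_{nk}}\,\abs{s_k - s}$ is a finite sum (fixed number of terms) each of whose summands tends to $0$ as $n\to\infty$ by (c). Hence $\limsup_{n}\abs{t_n - s} \le \varepsilon M$, and letting $\varepsilon \to 0$ gives $t_n \to s$. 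This direction is entirely routine.

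\textbf{Necessity} (limit-preservation implies (a)–(c)). Assume $t_n \to s$ whenever $s_n \to s$. Condition (c) follows by applying the hypothesis to the sequence that is $1$ at a fixed index $k$ and $0$ elsewhere (which converges to $0$): then $t_n = \mu_{nk}$ for all $n \ge k$, so $\mu_{nk}\to 0$. Condition (b) follows by applying the hypothesis to the constant sequence $s_n \equiv 1$: then $t_n = \sum_{k=0}^{n}\mu_{nk} \to 1$. The remaining condition (a) is the one substantive point; I would obtain it from the uniform boundedness principle. On the Banach space $c_0$ of null sequences with the sup norm, define the linear functionals $T_n(s) := \sum_{k=0}^{n}\mu_{nk}s_k$; testing against the finitely supported sequence $s_k = \operatorname{sgn}(\mu_{nk})$ for $k \le n$ (and $s_k = 0$ otherwise) shows $\norm{T_n} = \sum_{k=0}^{n}\abs{\mu_{nk}}$. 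By hypothesis $\setE{T_n(s)}_n$ converges (to $\lim_n s_n = 0$), hence is bounded, for every $s \in c_0$, so Banach–Steinhaus gives $\sup_n \norm{T_n} < \infty$, which is precisely (a).

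\textbf{Main obstacle.} Everything except (a) in the necessity direction is direct substitution or an $\varepsilon$-splitting argument; the only place that genuinely needs an idea is bounding the row sums $\sum_{k=0}^{n}\abs{\mu_{nk}}$. If one wishes to avoid functional analysis, (a) can be proved instead by an elementary gliding-hump argument: assuming the row sums are unbounded, one selects a rapidly increasing sequence of rows and, on disjoint blocks of column indices, builds a null sequence whose values decay but whose signs match $\mu_{nk}$ on each block, arranging that along those rows $t_n$ does not converge, contradicting the hypothesis. Since both routes are classical, in the paper itself one would likely just invoke \cite{H49b} as already done; for a self-contained treatment I would present the Banach–Steinhaus version above for brevity.
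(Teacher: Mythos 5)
Your proof is correct. Note, however, that the paper does not prove this lemma at all: it is stated as a classical result with only the citation to Hardy, and the proof environment that follows is the proof of Theorem~3, not of the lemma. So there is no in-paper argument to compare against; what you have written is the standard textbook proof of Silverman--Toeplitz regularity, and all three pieces are sound: the $\varepsilon$-splitting for sufficiency, the test sequences $e_k$ and $\mathbf{1}$ for (c) and (b), and Banach--Steinhaus on $c_0$ (with the sign-pattern sequences realizing $\norm{T_n}=\sum_{k=0}^{n}\abs{\mu_{nk}}$) for (a). One remark worth making if this were to be inserted into the paper: the proof of Theorem~3 only invokes the sufficiency direction (the coefficients $\mu_{J_k j}$ are verified to satisfy (a)--(c), and the conclusion drawn is that the transformed sequence inherits the limit), so for a self-contained treatment only the routine half of your argument is actually needed; the necessity direction, which is where the uniform boundedness principle or the gliding-hump construction enters, could be omitted without affecting anything downstream.
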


\begin{proof}[Proof of Theorem~\ref{thm}]
       In the following, $\varepsilon$ is an arbitrary positive number.
       
       Let $\varepsilon_0 = 1 - (1 + \frac{\varepsilon}{3r})^{-1}$ and for this $\varepsilon_0$, take $(\tilde{f}, \tilde{y}_0, k_0) $ in the assumption~\ref{asm5}.
       Below we only consider iterations after $k_0$ which is enough to state the claim. From the assumptions~\ref{asm1} and~\ref{asm3}, for the stability for the ODE with $\tilde{f}$ and $\tilde{y}_0$, the time step size $h_k$ should satisfy
       \begin{equation}
              \left| h_k \dot{\alpha}(t_{k-1}) \rho \left( \pdv{g_1[\tilde{f}]}{y}(y_{\hat{g}[\tilde{f}],\tilde{y}_0}^{(k-1)},\alpha(t_{k-1})) \right) \right| \le r, 
       \end{equation}
       which implies
       \begin{equation}
              h_k \dot{\alpha}(t_{k-1}) \le \frac{r}{1-\varepsilon_0} = r + \frac{\varepsilon}{3} \qquad (k=k_0+1, k_0+2, \ldots). \label{b}
       \end{equation}

       With this observation on time step sizes, a rough sketch of the proof is immediate:
       \begin{align}
              \alpha(t_{k_0+k}) - \alpha(t_{k_0})
               = \sum_{i = k_0 + 1}^{k_0+ k} \int_{t_{i-1}}^{t_i} \dot{\alpha}(t) \dd t 
               \simeq  \sum_{i = k_0 + 1}^{k_0+k} h_i \dot{\alpha}(t_{i-1})
              \le  rk + \frac{\varepsilon}{3} k. \label{rough}
       \end{align}


When $\dot{\alpha}$ is weakly monotonically decreasing (recall the assumption (ii) on $\alpha(t)$), $\simeq$ can be replaced with $\le$, 
and the proof is complete. Thus, our main task is to establish a similar estimate on the other case.
       
       When $\dot{\alpha}$ is weakly monotonically increasing, we have instead
       \begin{align}
              \alpha(t_{k_0+k}) - \alpha(t_{k_0})
              = \sum_{i = k_0+1}^{k_0+k} \int_{t_{i-1}}^{t_i} \dot{\alpha}(t)\dd t 
              \le  \sum_{i =k_0+ 1}^{k_0+k} h_i \dot{\alpha}(t_{i})
              \le  \sum_{i = k_0+1}^{k_0+k} \paren*{r + \frac{\varepsilon}{3}} \frac{\dot{\alpha}(t_i)}{\dot{\alpha}(t_{i-1})}. \label{inc}
       \end{align}


       Let us consider the sequence $E_k \coloneqq \dot{\alpha}(t_k)/\dot{\alpha}(t_{k-1})$ $(k=k_0+1, k_0+2,\ldots)$ and extract a subsequence of indices $\setE{\ell_j}_j$ corresponding to large elements of $\setE{E_{k}}_k$: 
       \begin{equation}
           \setE{ \ell_j }_{j=0}^{\infty} \coloneqq \{ k_0 \} \cup \setI*{ k }{ E_k > \frac{r + \frac{2\varepsilon}{3}}{r + \frac{\varepsilon}{3}} \eqqcolon \gamma\, (>1)}. 
       \end{equation}
       Here, we include $ \ell_0 = k_0 $ to simplify the discussion although $ E_{\ell_0 } $ is not defined above. 
       
       If $\setE{E_{\ell_j}}$ is a finite sequence, the claim is obvious.
       Otherwise, let us introduce $J_k$ as the largest index $j$ such that $\ell_{j} \le k_0 + k$.
       With these notation and by splitting the sum in the last term of~\eqref{inc} into the large elements and the rest, we see
       \begin{align}
            \MoveEqLeft
              \alpha(t_{k_0+ k}) - \alpha(t_{k_0})\\
              & \le \paren*{r + \frac{\varepsilon}{3}}\frac{r + \frac{2\varepsilon}{3}}{r + \frac{\varepsilon}{3}}k + \sum_{j=1}^{J_k} \paren*{r + \frac{\varepsilon}{3}} E_{\ell_j} = rk + \frac{2\varepsilon}{3}k+ \sum_{j=1}^{J_k} \paren*{r + \frac{\varepsilon}{3}} E_{\ell_j}.
       \end{align}

       We establish the claim by showing $\sum_{j=1}^{J_k} \paren*{r + \frac{\varepsilon}{3}} E_{\ell_j} \le \frac{\varepsilon}{3}k$ holds for infinitely many $k$. Consider the subsequence of time $\setE{t_{\ell_j}}_j \subseteq \setE{t_k}_k$. It is sufficient to prove that
       \begin{equation}
              \frac{\sum_{j=1}^{J} E_{\ell_j}}{\ell_{J+1}-\ell_1}
              = \sum_{j=1}^{J} \frac{\ell_{j+1} - \ell_j}{\sum_{s=1}^{J} (\ell_{s+1} - \ell_s)}  \paren*{\frac{E_{\ell_j}}{\ell_{j+1} - \ell_j}} \to 0 \quad \text{as} \quad J \to \infty.
       \end{equation}
       Letting $\mu_{J_k j} = (\ell_{j+1} - \ell_j)/\sum_{l=1}^{J_k} (\ell_{l+1} - \ell_l)$, we observe that $\setE{\mu_{J_k j}}$ satisfies the conditions (a), (b), and (c) in~\cref{lem}.
       Thus,
       \begin{equation}
              \lim_{J \to \infty} \frac{\sum_{j=1}^{J} E_{\ell_j}}{\ell_{J+1}-\ell_1} = \lim_{j \to \infty} \frac{E_{\ell_j}}{\ell_{j+1} - \ell_j}.
       \end{equation}

       Therefore, we complete the proof by showing ${E_{\ell_j}}/(\ell_{j+1} - \ell_j) \to 0$ as $j \to \infty$.
       Since $\dot{\alpha}$ is weakly monotonically increasing, and since $E_{k} \ge 1$ for $k \ge k_0$ and $E_{\ell_j} > \gamma$ for $j \ge 1$,
       \begin{align}
              \sum_{i=k_0+1}^{k_0+k} \frac{1}{\dot{\alpha}(t_{i-1})}
              &\le \sum_{j = 0}^{J_k} \frac{1}{\dot{\alpha}(t_{\ell_j})} (\ell_{j+1} - \ell_j)\label{J}\\
              &= \frac{ \ell_{1} - \ell_0 }{\dot{\alpha}(t_{\ell_0})} + \sum_{j = 1}^{J_k} \left[ \frac{1}{\dot{\alpha}(t_{\ell_0})} \paren*{\prod_{i=k_0+1}^{\ell_j}\frac{1}{E(t_i)}} (\ell_{j+1} - \ell_j) \right]\\
              &< \frac{ \ell_{1} - \ell_0 }{\dot{\alpha}(t_{\ell_0})} + \sum_{j = 1}^{J_k} \frac{1}{\dot{\alpha}(t_{\ell_0})} \frac{1}{\gamma^{j-1}}\frac{1}{E_{\ell_{j}}} (\ell_{j+1} - \ell_j). 
       \end{align}
       Here if we take the limit of $k\to\infty$, the most left hand side should tend to $\infty$, which is visible from the simple observation
       \begin{align}
              \lim_{k \to \infty} (t_{k_0+k} - t_{k_0}) 
              = \lim_{k \to \infty}\sum_{i=k_0+1}^{k_0+k} h_i 
              \le \lim_{k \to \infty} \paren*{r + \frac{\varepsilon}{3}} \sum_{i=k_0+1}^{k_0+k} \frac{1}{\dot{\alpha}(t_{i-1})}.
       \end{align}
       Since $t_k\to\infty$ as $k\to\infty$ from the assumption~(\ref{asm3}), $\sum 1/\dot{\alpha}$ should be so as well.
       Thus,~\eqref{J} reveals
       \begin{equation}
              \sum_{j = 1}^{\infty} \frac{1}{\gamma^{j-1}}\frac{1}{E_{\ell_{j}}} (\ell_{j+1} - \ell_j) = \infty,
       \end{equation}
       which implies
       \begin{equation}
              \frac{1}{\gamma^{j-1}}\frac{1}{E_{\ell_{j}}} (\ell_{j+1} - \ell_j) = \Omega\left( \frac{1}{j^2} \right).
       \end{equation}
       Therefore, since $\gamma > 1$, we have ${E(t_{\ell_{j}})}/(\ell_{j+1} - \ell_j) \to 0$ as $j \to \infty$.
\end{proof}

\section{Examples}\label{sec:example}
In this section, we present several examples of essential convergence rates and compare them.
In the examples below, we focus on two types of ODE systems: the gradient flow, and the systems of the form \[
\simulparen{
\dot{x} &= g_1[f](x,v,t)\\
\dot{v} &= g_2[f](x,v,t).
}
\]
Using the notation introduced in \cref{sec:preliminary}, we can write $y = (x^\top, v^\top)^\top$ and $\dot{y} = g[f](y,t) = (g_1[f](y,t)^\top, g_2[f](y,t)^\top)^\top$.
In this section, however, we prefer the notation $(x,v)$ since that is more common in the optimization literature.

We consider a time-rescaling for each ODE such that it becomes $1$-essential, i.e., it satisfies condition~\eqref{cond:sup} with $c = 1$.
For verifying this condition, it is convenient to check the following two inequalities:
\begin{align}
    \sup_{ (f,y_0) \in \mathcal{F} \times \mathcal{V}} \limsup_{t\to\infty}\rho\left({\partial g[f](y_{g[f],y_0}(t),t)}/{\partial y} \right) &\le 1, \label{cond:ed:ub} \\
    \sup_{(f,y_0)\in \mathcal{L}_g} \lim_{t\to \infty} \rho\left({\partial g[f](y_{g[f],y_0}(t),t)}/{\partial y} \right) &\ge 1. \label{cond:ed:lb}
\end{align}
Together, these conditions immediately yield the desired claim, in view of the general relation, 
\[ \sup_{ (f,y_0) \in \mathcal{F} \times \mathcal{V}} \limsup_{t\to\infty}\rho\left(\frac{\partial g[f](y_{g[f],y_0}(t),t)}{\partial y} \right) \ge \sup_{(f,y_0)\in \mathcal{L}_g} \lim_{t\to \infty} \rho\left(\frac{\partial g[f](y_{g[f],y_0}(t),t)}{\partial y} \right) \]
as already noted in \cref{rem:ed}.

In this section, we restrict attention to $\mathcal{F} = \mathcal{S}_{\mu,L}$, where $\mu$ may be zero.
For $f \in \mathcal{S}_{\mu,L}$, let $(L \ge)\, \lambda_1 \ge \ldots \ge \lambda_d \,(\ge \mu)$ denote the eigenvalues of $\nabla^2 f(x)$.
Although these eigenvalues depend on the solution, we simply write $ \lambda_1,\dots,\lambda_d $ to avoid cumbersome notation.

Given a second-order ODE, the procedure for transforming it into a first-order system $\dot{y} = g[f](y,t)$ is not unique.
However, if we restrict ourselves to transformations satisfying certain conditions,
it can be shown that the eigenvalues of $g[f](y,t)$ remain invariant under such transformations (see~\cref{app:firstorder}).

\subsection{Gradient flow}
For $f \in \mathcal{S}_{\mu,L}$, consider the (rescaled) gradient flow
\begin{equation}\label{eq:ode:gf}
       \dot{x} = - \dot{\alpha} (t) \nabla f(x), \quad x(0) = x_0 \in \RR^d.
\end{equation}
The convergence rate is given by $f(x(t)) - f^\star \le \frac{1}{2 \alpha(t) }{\norm{x_0-x^\star}^2}$ if $\mu = 0$~\cite{SBC16} and $f(x(t)) - f^\star \le \e^{-2\mu \alpha(t) }(f(x_0) - f^\star)$ if $\mu > 0$~\cite{SRBd17}.
The set $ \mathcal{V} $ of initial values is defined as $ \mathcal{V} = \setI{x_0 \in \RR^d }{\norm{x_0 - x^\star} \le R}$ if $ \mu = 0 $ and $ \mathcal{V} = \setI{x_0 \in \RR^d }{f(x_0) - f^\star \le R}$ if $ \mu > 0 $. 
As this example illustrates, the setting of $ \mathcal{V}$ includes the specification of ``quantities regarded as constant in the convergence rates''.

We see that the ODE becomes 1-essential when $\alpha(t) = t/L$.
Indeed, the eigenvalue of the Jacobian of \eqref{eq:ode:gf} are $ - \dot{\alpha} (t) \lambda_i $, whose absolute values are bounded above by $\dot{\alpha} (t)L$.
Thus, when $ \alpha (t) = t / L $, condition~\cref{cond:ed:ub} is satisfied.
Moreover, by taking $f \in \mathcal{S}_{\mu,L}$ such that $f(x) = \frac{L}{2} \norm{x}^2$, we see that condition~\eqref{cond:ed:lb} also holds.

Hence, in view of the convergence rates of the gradient flow given above, the essential convergence rates of $f(x) - f^\star$ are $\Order{{1}/{t}}$ when $\mu = 0$ and $\Order{\e^{-2(\mu/L) t}}$ when $\mu > 0$.

\subsection{ODE for accelerated gradient method}\label{subsec:NAG}
For $f \in \mathcal{S}_{0,L}$, consider the following rescaled ODE:
\begin{equation}
       \simulparen{
              \dot{x} &= \frac{\dot{\alpha}(t)}{\alpha(t)}(v-x)\\
              \dot{v} &= -\frac{\dot{\alpha}(t)}{4}\nabla f(x),
       }\label{NAG}
\end{equation}
where $(x(0),v(0)) \in \mathcal{V} = \setI{(x_0,x_0) \in \RR^d \times \RR^d}{\norm{x_0 - x^\star} \le R}$.
If $\alpha(t) = t^2$, the ODE coincides with the continuous-time limit of AGM~\eqref{example}, and if $\alpha(t) = \sqrt{2}t^2$, it coincides with that of the optimized gradient method (OGM)~\cite{DT14,KF16}.
For the solution $(x(t), v(t))$ of  ODE~\eqref{NAG}, it holds that $f(x(t)) - f^\star \le {2\norm{x_0 - x^\star}^2}/{\alpha(t)}$~\cite{WRJ21}.

In the following, we derive that when $\alpha(t) = t^2/L$, 
the rescaled ODE is $1$-essential.
The eigenvalues of the Jacobian of ODE~\eqref{NAG} are given by
\begin{equation}
       \frac{\dot{\alpha}(t)}{2 \alpha(t)} \paren*{-1 \pm \sqrt{ 1 - \alpha (t) \lambda_i }} \quad (i=1,\ldots,d).
\end{equation}
For indices $i$ such that $ \lambda_i = 0 $, the corresponding eigenvalues are $ - \dot{\alpha}(t) / \alpha (t) $ and $0$. 
On the other hand, for indices $i$ with $ \lambda_i > 0 $ and sufficiently large $t$, the corresponding eigenvalues are complex number, with absolute values $ \dot{\alpha}(t)/2 \cdot \sqrt{\lambda_i / \alpha(t)} $. 
Consequently, the spectral radius satisfies
\[ \rho\left(\frac{\partial g[f](y_{g[f],y_0}(t),t)}{\partial y} \right) \le \max \setE*{ \frac{\dot{\alpha}(t)}{\alpha(t)}, \frac{\dot{\alpha}(t)}{2} \sqrt{ \frac{L}{ \alpha(t) } } }. \]
Therefore, when $\alpha(t) = t^2/L$, the condition~\eqref{cond:ed:ub} is satisfied. 
Moreover, by taking $f \in \mathcal{S}_{0,L}$ such that $f(x) = \frac{L}{2} \norm{x}^2$, we see the condition~\eqref{cond:ed:lb} is also satisfied.
This ODE is equivalent to 
\begin{equation}
       \ddot{x} + \frac{3}{t} \dot{x} + \frac{1}{L}\nabla f(x) = 0. \label{NAGLODE}
\end{equation}

Hence, the essential convergence rate of $f(x) - f^\star$ is ${2L\norm{x_0-x^\star}^2}/{t^2}$.
In comparison, the convergence rate of AGM is $f(x_k) - f^\star \le {2L\norm{x_0-x^\star}^2}/{k^2}$~\cite{N83}, while that of OGM is bounded by $f(x_k) - f^\star \le {L\norm{x_0-x^\star}^2}/{k^2}$~\cite{KF16}.
This implies that although the underlining ODE is the same, the discretization of OGM yields a faster convergence rate than AGM by a factor of $\sqrt{2}$.

\subsection{ODE for accelerated gradient method with shifted gradient}
For $f \in \mathcal{S}_{0,L}$ and $b >0$, consider the following ODE:
\begin{equation}
       \simulparen{
              \dot{x} &= \frac{2 \dot{\alpha} (t)}{\alpha(t)}(v-x)\\
              \dot{v} &= -\frac{\alpha(t) \dot{\alpha} (t)}{2}\nabla f\paren*{\paren*{1-\frac{2b}{\alpha(t)}}x + \frac{2b}{\alpha(t)}v},
       }\label{NAGS}
\end{equation}
where $(x(0),v(0)) \in \mathcal{V} = \setI{(x_0,x_0) \in \RR^d \times \RR^d}{\norm{x_0 - x^\star} \le R}$. 
If $\alpha(t) = t$, this ODE coincides with~\eqref{ODEshifted}. 
For ODE~\eqref{NAGS}, it holds that
\begin{equation}
       \min_{0\le s \le t} \norm*{\nabla f \paren*{x(s) + \frac{b}{\dot{\alpha}(s)}\dot{x}(s)}}^2 \le \frac{18\norm{x_0-x^\star}}{7b (\alpha(t))^3}
\end{equation}
(see \eqref{eq:rate:NAGS}). 

The appropriate time scaling of ODE~\eqref{NAGS} depends on $b$. 
The eigenvalues of its Jacobian are
\[ \dot{\alpha} (t) \paren*{ - \paren*{ \frac{1}{\alpha(t)} + \frac{b \lambda_i}{2} } \pm \sqrt{ \paren*{ \frac{1}{\alpha(t)} + \frac{b \lambda_i}{2} }^2 - \lambda_i } }. \]
Since 
\[ \lim_{ t \to \infty } \paren*{ - \paren*{ \frac{1}{\alpha(t)} + \frac{b \lambda_i}{2} } \pm \sqrt{ \paren*{ \frac{1}{\alpha(t)} + \frac{b \lambda_i}{2} }^2 - \lambda_i } } = - \frac{b \lambda_i }{2} \pm \sqrt{ \frac{b^2 \lambda_i^2}{4} - \lambda_i }, \]
which does not vanish for nonzero $ \lambda_i $, 
it is necessary that $ \dot{\alpha} (t) = \Order{1} $ in order to ensure condition~\eqref{cond:ed:lb}. 
Therefore, we assume $ \lim_{ t \to \infty } \dot{\alpha} (t) = a < \infty $. 
Then, dividing into cases according to the sign of $ \frac{b^2 L^2}{4} - L $, we obtain
\begin{align}
    \limsup_{ t \to \infty } \rho\left(\frac{\partial g[f](y_{g[f],y_0}(t),t)}{\partial y} \right)
    &\le a \times \max \setE*{ \sqrt{L}, \max_{ \lambda \in [0,L] \cap \left[\frac{4}{b^2},\infty \right) } \paren*{ \frac{b \lambda}{2} + \sqrt{ \frac{b^2 \lambda^2}{4} - \lambda } } }\\
    &= \begin{cases} a \sqrt{L} & \paren*{ b \le \frac{2}{\sqrt{L}} } \\ \frac{a}{2} \paren*{ bL + \sqrt{ (bL)^2 - 4 L } } & \paren*{ b > \frac{2}{\sqrt{L}} } \end{cases}. 
\end{align}

When $b \le 2/\sqrt{L}$, choosing $\alpha(t) = t/\sqrt{L}$ makes the ODE $1$-essential. 
The above discussion verifies the condition~\eqref{cond:ed:ub} and condition~\eqref{cond:ed:lb} can be confirmed by taking $f(x) = \frac{L}{2} \norm{x}^2$ as well as the previous example. 
Then, ${18L\sqrt{L}\norm{x_0-x^\star}^2}/{(7bt^3)}$ is an essential convergence rate of $\min_{0\le s \le t} \norm{\nabla f(x(s) + b\sqrt{L} \dot{x}(s))}^2$. 

Similarly, when $b > 2/\sqrt{L}$, choosing $\alpha(t) = {2t}/{(bL + \sqrt{(bL)^2 - 4L})}$ makes the ODE $1$-essential, and 
${9 \paren*{ bL + \sqrt{(bL)^2 - 4L} }^3 \norm{x_0-x^\star}^2}/{(28bt^3)}$ is an essential convergence rate. 

Consequently, the optimal choice of $b$ is $b = 2/\sqrt{L}$, which yields the optimal essential convergence rate ${9L^2\norm{x_0-x^\star}^2}/{(7t^3)}$.
Given its connection with the second-order form \eqref{ODEshifted}, the system \eqref{NAGS} with this choice of $\alpha$ and $b$ can be regarded as an optimal implicit-velocity ODE~\cite{CSY22}.

\subsection{ODE for accelerated gradient method for strongly convex functions}
For $f \in \mathcal{S}_{\mu,L}$ where $\mu > 0$,
consider the following ODE:
\begin{equation}
       \simulparen{
              \dot{x} &= \dot{\alpha}(t) \sqrt{\mu}(v-x)\\
              \dot{v} &= \dot{\alpha}(t) \sqrt{\mu}(x-v) - \frac{\dot{\alpha}(t)}{\sqrt{\mu}} \nabla f(x),\label{agmsc}
       }
\end{equation}
where $(x(0),v(0)) \in \mathcal{V} = \setI{(x_0,x_0) \in \RR^d \times \RR^d}{f(x_0) - f^\star + \frac{\mu}{2}\norm{x_0 - x^\star}^2 \le R}$. 
This is a continuous-time limit of AGM for strongly convex functions, with convergence rate $f(x(t)) - f^\star \le \e^{-\sqrt{\mu} \alpha (t)}(f(x_0) - f^\star + \frac{\mu}{2}\norm{x_0 - x^\star}^2)$~\cite{WRJ21}.

When $\alpha(t) = t/\sqrt{L}$, the ODE is $1$-essential.
Indeed, the eigenvalues of the Jacobian matrix are
\begin{equation}
       \dot{\alpha} (t) \paren*{ -\sqrt{\mu} \pm \sqrt{\lambda_i-\mu}\,\mathrm{i} } \quad (i=1,\ldots,d) 
\end{equation}
and hence the spectral radius is bounded by $ \dot{\alpha} (t) \sqrt{L} $. 
Therefore, condition~\eqref{cond:ed:ub} is satisfied when $ \alpha(t) = t / \sqrt{L}$. 
In addition, by taking $f(x) = \frac{L}{2} \norm{x}^2$, condition~\eqref{cond:ed:lb} is also satisfied.
In this case, the ODE becomes
\begin{equation}
       \ddot{x} + 2\sqrt{\frac{\mu}{L}} \dot{x} + \frac{1}{L}\nabla f(x) = 0.
\end{equation}

As a consequence, $\Order{\e^{-\sqrt{\mu/L} t}}$ is an essential convergence rate of $f(x(t)) - f^\star$. 
This perfectly matches the convergence rate of AGM for strongly convex functions $f(x_k) - f^\star \le (1 - \sqrt{\mu/L})^k(f(x_0) - f^\star + \frac{\mu}{2}\norm{x_0 - x^\star}^2)$.

\subsection{ODE for Triple Momentum Method (TMM)}
For $f \in \mathcal{S}_{\mu,L}$ where $\mu > 0$,
consider the following ODE:
\begin{equation}
       \simulparen{
              \dot{x} &= 2\dot{\alpha}(t)\sqrt{\mu}(v-x)\\
              \dot{v} &= \dot{\alpha}(t)\sqrt{\mu}(x-v) - \frac{\dot{\alpha}(t)}{\sqrt{\mu}}\nabla f(x),
       }
\end{equation}
where $(x(0),v(0)) \in \mathcal{V} = \setI{(x_0,x_0) \in \RR^d \times \RR^d}{f(x_0) - f^\star + \frac{\mu}{2}\norm{x_0 - x^\star}^2 \le R}$.
This is a continuous-time limit of TMM, with convergence rate $\norm{x(t) + \frac{1}{2\dot{\alpha}(t)\sqrt{\mu}}\dot{x}(t) - x^\star}^2 \le \e^{-2\sqrt{\mu} \alpha(t)}(\frac{1}{\mu}(f(x_0) - f^\star) + \frac{1}{2}\norm{x_0 - x^\star}^2)$~\cite{USM24}.

When $\alpha(t) = t/\max \setE{2\sqrt{\mu}, \sqrt{2L}}$, the ODE becomes $1$-essential.
Indeed, the eigenvalues of the Jacobian matrix are
\begin{equation}
       \frac{\dot{\alpha} (t)}{2} \paren*{ -3\sqrt{\mu} \pm \sqrt{9\mu-8\lambda_i}} \quad (i=1,\ldots,d).
\end{equation}
Therefore, the spectral radius can be evaluated as 
\begin{align}
\limsup_{ t \to \infty } \rho\left(\frac{\partial g[f](y_{g[f],y_0}(t),t)}{\partial y} \right) 
&\le \lim_{ t \to \infty } \paren*{ \frac{\dot{\alpha} (t)}{2} \max_{ \lambda \in [\mu, L] } \abs*{ -3\sqrt{\mu} \pm \sqrt{9\mu-8\lambda} } } \\
&= \paren*{ \lim_{ t \to \infty } \dot{\alpha} (t) } \cdot \max \setE*{ 2 \sqrt{\mu}, \sqrt{2L} },
\end{align}
which implies that condition~\eqref{cond:ed:ub} holds when $ \alpha (t) = t/\max \setE{2\sqrt{\mu}, \sqrt{2L}} $. 
In addition, by taking $f \in \mathcal{S}_{\mu,L}$ such that
\begin{equation}
       f(x) = \begin{cases}
              \frac{L}{2} \norm{x}^2 & \text{if } \sqrt{2\mu} \le \sqrt{L},\\
              \frac{\mu}{2} \norm{x}^2 & \text{otherwise},
       \end{cases}
\end{equation}
we can confirm condition~\eqref{cond:ed:lb} is satisfied.

When $\sqrt{L} \ge \sqrt{2\mu}$, i.e., $\alpha(t) = t/\sqrt{2L}$, the ODE becomes
\begin{equation}
    \ddot{x} + \sqrt{ \frac{2\mu}{L} } \dot{x} + \frac{1}{2L} \nabla f(x) = 0,
\end{equation}
and an essential convergence rate of $\norm*{x(t) + \sqrt{L/(2\mu)}\dot{x}(t) - x^\star}^2$ is $\Order{\e^{-\sqrt{2\mu/L} t}}$,
which is faster than that of AGM ODE~\eqref{agmsc} by a factor of $\sqrt{2}$.
However, the discrete convergence rate of TMM is $\Order{(1-2\sqrt{\mu/L})^k}$, which is faster than the essential convergence rate of the TMM ODE by a factor of $\sqrt{2}$.
This implies that the twice faster discrete convergence rate of TMM than AGM is achieved through a combination of the underlining ODE and its discretization.



\section{Discussion}\label{sec:discussion}
In this section, we discuss the consequence and assumptions of \cref{thm}.

\subsection{Recovering rates from decelerated ODEs}
From \cref{thm}, we can draw two conclusions.

The first, as previously mentioned, is that no matter how much the ODE is accelerated, stable discretization inevitably slows it down, resulting in the essential convergence rate.

The second conclusion is that, starting from a decelerated ODE, it is possible to accelerate the convergence rate upon discretization by increasing the step sizes. However, the resulting rate cannot exceed the essential convergence rate. This naturally raises the question: can the resulting rate always match the essential convergence rate?

The answer is negative. The following example shows that the essential convergence rate cannot be recovered if the step sizes are restricted to satisfy the stability conditions derived from linear stability analysis.

Let $\mathcal{F} = \setI{f\colon \RR \to \RR}{f(x) = x^4 / 4}$, $\mathcal{V} = \setE{1}$, and consider the gradient flow
\begin{equation}
       \dot{x} = -\nabla f(x) = -x^3, \quad x(0) = 1. \label{x3}
\end{equation}
The solution is
\begin{equation}
       x(t) = \frac{1}{\sqrt{2t + 1}},
\end{equation}
and therefore the convergence rate is $f(x(t)) - f^\star = \Theta(1/t^2)$.

However, the Jacobian of the right-hand side of~\eqref{x3} is
\begin{equation}
       \pdv{}{x}(-x^3)(t) = \frac{-3}{2t + 1} \to 0 \quad \text{as} \quad t \to \infty, \label{nlin}
\end{equation}
which implies that ODE~\eqref{x3} is not 1-essential.

Now consider the time-rescaled gradient flow
\begin{equation}
       \dot{x} = -\dot{\alpha}(t) x^3, \quad x(0) = 1. \label{x31}
\end{equation}
When $\alpha(t) = \frac{3}{2}(\e^{\frac{2}{3}t}-1)$, it becomes 1-essential since its solution is
\[
    x(t) = \frac{1}{\sqrt{3(\e^{\frac{2}{3}t}-1)+1}},
\]
and the Jacobian of the right-hand side of \eqref{x31} is
\begin{equation}
    \pdv{}{x}(-\e^{\frac{2}{3}t} x^3) = -3\e^{\frac{2}{3}t} x^2 = -3\e^{\frac{2}{3}t}\frac{1}{3(\e^{\frac{2}{3}t} - 1) + 1} \to -1 \quad \text{as} \quad t \to \infty.
\end{equation}
Hence, the essential convergence rate is $f(x(t)) - f^\star = \Theta(\e^{-\frac{4}{3}t})$.

This example illustrates that it is impossible to recover the essential convergence rate by discretizing ODE~\eqref{x3} using step sizes constrained by the linear stability domain. As \eqref{nlin} shows, the step size within the stability region can increase only at a linear scale, whereas exponential growth is required to restore the essential convergence rate.

Interestingly, numerical experiments reveal that such aggressive step-size growth is in fact numerically feasible. In particular, line-search adaptations based on the Armijo condition lead to these aggressive choices. This phenomenon arises because ODE~\eqref{x3} is purely nonlinear, while step-size restrictions are derived from linear stability analysis.

Finally, it is important to emphasize that this failure does not contradict \cref{thm}. The theorem does not assert that $\alpha(t_{k_j}) = rk_j + \varepsilon k_j$, but only that $\alpha(t_{k_j}) \le rk_j + \varepsilon k_j$.


\subsection{Possibility of faster discrete rates than the essential convergence rate using step sizes depending on $f$}
Assumption~\ref{asm3} in~\cref{thm} states that the step size is chosen so that the linear stability holds for all $f \in \mathcal{F}$.
This is a strong requirement, and here we consider the possibility of acceleration if the step-size restriction is allowed to vary depending on each $f \in \mathcal{F}$.

We examine a scenario in which the functions that decrease exactly at the worst-case convergence rate differ from those that constrain the step sizes in \cref{thm}.
In such a case, it may be possible to accelerate the convergence rate.
As an example, consider the 1-essential AGM ODE~\eqref{NAGLODE} for $\mathcal{S}_{0,L}$ with $d=1$.
The essential convergence rate is $\Order{1/t^2}$, and this is tight. In fact, for $c > 3$, we consider $f \in \mathcal{S}_{0,L}$ defined by
\begin{equation}
       f(x) = \begin{cases}
            \frac{L(c-3)}{c(c-2)^2}\abs{x}^c  & \text{if} \quad  \abs{x} \le 1, \\
            \frac{L(c-3)}{c(c-2)^2} (c(\abs{x} -1) + 1) & \text{otherwise.} 
       \end{cases}
       \label{fattouch}
\end{equation}
For this $f$, ODE~\eqref{NAGLODE} becomes
\[
    \simulparen{
        \dot{x} &= \frac{2}{t}(v-x)\\
        \dot{v} &= -\frac{t}{2L}\nabla f(x),
    }\label{NAG2}
 \quad (x(0), v(0)) = (1,1)
\]
The solution is unique by~\cite[Theorem~1]{SBC16} and is given by
\[
    x(t) = \paren*{\frac{2}{t + 2}}^{\frac{2}{c-2}}.
\]
hence, the convergence rate is 
\begin{equation}
       f(x(t)) - f^\star = \frac{L(c-3)}{c(c-2)^2}\paren*{\frac{2}{t + 2}}^{\frac{2c}{c-2}}.
\end{equation}
The exponent $\frac{2c}{c-2}$ tends to 2 as $c \to \infty$ (a similar discussion can be found in~\cite{ACPR18}).
Therefore, the essential convergence rate is $\Omega(1/t^2)$.
However, in this case we observe that $\rho\left(\pdv{g[f]}{y}\right) = \Order{1/t}$, which implies that constant step sizes are overly conservative for this $f$ in light of linear stability analysis. This suggests that the convergence rate may be accelerated by increasing the step sizes.
On the other hand, quadratic functions such as $f(x) = \tfrac{L}{2}x^2$ are instances that restricts step sizes in \cref{thm}, and for these functions we cannot increase the step sizes. However, for quadratic functions, this ODE has a convergence rate $\Order{1/t^3}$~\cite{SBC16}.
This observation implies that if step-size restrictions arise only from quadratic functions, it may be possible to accelerate the discrete convergence rate to $\Order{1/k^3}$ by allowing the step sizes to depend on $f$.

Note that this does not contradict the lower bound $\Omega(1/k^2)$~\cite{D17}, since this bound is valid only for finite iterations.
Actually, there is a method that achieves a convergence rate of $\Order{1/k^{2.5}}$~\cite{JM23}.

\subsection{Possibility of faster discrete rates than the essential convergence rate using time-dependent numerical methods}
As illustrated in the examples, discretization can yield a constant-factor acceleration over the essential convergence rate. In \cref{thm}, this phenomenon is described by the factor $r$ in the theorem's conclusion. This implies that if a numerical method has a stability domain that is large in the direction of the eigenvalues of the Jacobian, the convergence rate may be improved by a constant factor.
Similar insights are reported in~\cite{SRBd17}, where AGM for strongly convex functions is interpreted as a linear multi-step numerical integration method, for the gradient flow. This numerical method exhibits high stability for the gradient flow, enabling large step sizes and yielding a constant-factor improvement in the exponent of the convergence rate compared to the other methods.

It is also possible to accelerate convergence rates beyond constant factors by using discretizations that do not satisfy assumption~\ref{asm1}.
If the stability domain expands as the iteration proceeds, larger step sizes can be employed, thereby achieving more than a constant-factor acceleration.
A related result is presented in~\cite{NSM24}, where the AGM for convex functions, which achieves the convergence rate of $\Order{1/k^2}$, is interpreted as a variable step-size linear multi-step method for the gradient flow, whose convergence rate is $\Order{1/t}$.
It is demonstrated that in this case the step-size restriction increases linearly with the iteration count, resulting in the acceleration of the convergence rate by a factor of $t$.

\section{Conclusions}
\label{sec:conclusions}
We propose the notion of the essential convergence rate, which eliminates the ambiguity of convergence rates and facilitates meaningful comparison of ODE models.
According to linear stability analysis, any acceleration of convergence rates achieved by time-rescaling ODEs is offset during discretization, ultimately yielding the essential convergence rate. 

\appendix
\section{Convergence rates of illustrative examples} \label{app:rates_ie}

\paragraph{Gradient flow for strongly convex quadratic functions}

For the $1$-dimensional problem $ \dot{x} = - \lambda x $ with the objective function $ f(x) = \frac{\lambda}{2} x^2 $, the solution can be written as $ x(t) = x(0) \e^{- \lambda t} $. Therefore, $ f (x(t)) - f^\star = \frac{\lambda}{2} \paren*{ x(0) }^2 \e^{- 2 \lambda t }$ holds. 
This implies that, when we consider $\mu$-strongly convex quadratic objective functions on $ \RR^d $, the convergence rate of the optimal gap is $\Order{\e^{-2\mu t}}$. 

\paragraph{Steepest descent for strongly convex quadratic functions}

The solution of the gradient descent $ \x{k+1} = \x{k} - h \lambda \x{k} $ for the $1$-dimensional objective function $ f(x) = \frac{\lambda}{2} x^2 $ can be written as $ \x{k} = ( 1 - h \lambda)^k \x{0} $. 
Therefore, for $L$-smooth and $\mu$-strongly convex quadratic objective functions on $ \RR^d $, the convergence rate of the optimal gap is $ \max_{ \lambda \in [\mu,L]} \abs*{ 1 - h \lambda }^{2k} f \paren*{\x{0} } $. 
The step size that optimizes the convergence rate is $ h = 2 / (L+\mu) $, and the rate is 
\[ f\paren*{ \x{k} } - f^\star \le \paren*{ 1 - \frac{2\mu}{L+\mu} }^{2k} f \paren*{\x{0} } = \Order{ \e^{ - 4 \frac{\mu}{L+\mu} k } }. \]

\section{Relationship between first-order reformulations of ODEs and the eigenvalues of the Jacobian of the vector fields}\label{app:firstorder}

We consider the following second-order ODE:
\begin{equation}
       \ddot{x}(t) + a_1(t) \dot{x}(t) +  a_2(t)\nabla f(x(t)) = 0,
\end{equation}
where $a_1(t),\, a_2(t) \in \mathbb{R}$ are possibly time-dependent coefficients.
The standard method for converting this into a first-order system is to introduce $v = \dot{x}$ and describe the ODE as a system for $x$ and $v$:
\begin{equation}
       \begin{cases}
              \dot{x}(t) = v(t),\\
              \dot{v}(t) = -a_1(t) v(t) - a_2(t)\nabla f(x(t)).
       \end{cases}\label{sysx}
\end{equation}
Here, we consider alternative first-order reformulations and examine whether the eigenvalues of the Jacobian of the resulting vector field remain invariant.

Using a possibly time-dependent invertible matrix $A(t) \in \mathbb{R}^{2d \times 2d}$, define $v_1(t) \in \mathbb{R}^d$ and $v_2(t) \in \mathbb{R}^d$ via:
\begin{equation}
       \begin{pmatrix}
              x(t)\\ \dot{x}(t)
       \end{pmatrix}
       =
       A(t)
       \begin{pmatrix}
              v_1(t)\\ v_2(t)
       \end{pmatrix}.
\end{equation}
Hereafter, we omit the argument $t$ for brevity.

Then, the ODE can be rewritten as a system for $v_1$ and $v_2$:
\begin{equation}
       \dv{}{t}
       \begin{pmatrix}
              v_1\\ v_2
       \end{pmatrix}
       = \dv{A^{-1}}{t} A
        \begin{pmatrix}
              v_1\\ v_2
       \end{pmatrix}
       + A^{-1}\!
       \begin{pmatrix}
              O_d & I_d\\
              O_d & -a_1 I_d
       \end{pmatrix}
       A
       \begin{pmatrix}
              v_1\\ v_2
       \end{pmatrix}
       - A^{-1}\!\!
       \begin{pmatrix}
              O_d\\ a_2\nabla f\left(\left[A\begin{pmatrix} v_1\\ v_2 \end{pmatrix}\right]_1\right)
       \end{pmatrix}, \label{sysv}
\end{equation}
where $[\cdot]_1$ denotes the first $d$ elements of a vector.

\begin{proposition}
        Suppose that $\dv{A^{-1}}{t} A \to O$ as $t \to \infty$.
        Then, in the limit as $t \to \infty$, the eigenvalues of the Jacobian of the right-hand side vector field of~\eqref{sysv} and \eqref{sysx} coincide.
\end{proposition}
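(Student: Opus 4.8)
The plan is to reduce the statement to a direct Jacobian computation followed by a perturbation argument. First I would compute the Jacobian $J_2$ of the right-hand side of~\eqref{sysv} with respect to $(v_1,v_2)$ and show that
\[
J_2 = \dv{A^{-1}}{t}A + A^{-1} J_1 A ,
\qquad
J_1 := \begin{pmatrix} O_d & I_d \\ -a_2\nabla^2 f(x) & -a_1 I_d \end{pmatrix},
\]
where $J_1$ is precisely the Jacobian of the right-hand side of~\eqref{sysx}. The two purely linear terms of~\eqref{sysv} contribute $\dv{A^{-1}}{t}A$ and $A^{-1}\begin{pmatrix} O_d & I_d \\ O_d & -a_1 I_d \end{pmatrix}A$ directly. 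For the nonlinear term, writing $x = [A(v_1,v_2)^\top]_1$ and differentiating through the chain rule (noting that $\partial x/\partial(v_1,v_2)$ is the first $d$ rows of $A$) gives the contribution $-A^{-1}\begin{pmatrix} O_d & O_d \\ a_2\nabla^2 f(x) & O_d \end{pmatrix}A$; summing the two bracketed matrices yields exactly $A^{-1}J_1 A$. This is a routine block calculation.

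Then I would conclude as follows. By hypothesis $\dv{A^{-1}}{t}A \to O$, so $J_2(t) - A(t)^{-1}J_1(t)A(t) \to O$ as $t\to\infty$. The matrix $A^{-1}J_1 A$ is similar to $J_1$, hence has the same characteristic polynomial and therefore the same eigenvalues as $J_1$. Since the coefficients of a characteristic polynomial are polynomials in the matrix entries, the characteristic polynomial of $J_2(t)$ differs from that of $J_1(t)$ by coefficients that vanish as $t\to\infty$; invoking the continuity of the roots of a monic polynomial with respect to its coefficients then gives that the spectra of $J_2(t)$ and $J_1(t)$ become arbitrarily close (in the optimal matching distance between multisets of eigenvalues), i.e.\ they coincide in the limit $t\to\infty$.

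The step I expect to be the main obstacle is making this last ``continuity of eigenvalues'' step uniform in $t$: because $J_1(t)$ itself moves with $t$, one needs a quantitative perturbation estimate (a Bauer--Fike/Elsner-type bound on the matching distance) together with a uniform bound on $\norm{A(t)^{-1}J_1(t)A(t)}$ — equivalently on $\norm{J_2(t)}$, since the two differ by a vanishing term — so that the perturbation of the characteristic-polynomial coefficients is uniformly small and all eigenvalues involved stay in a fixed compact set. Such boundedness holds in every concrete rescaling treated in~\cref{sec:example}, where the time scale is chosen precisely so that the spectral radius converges and the Jacobian remains bounded; in the absence of it the conclusion should be read as a statement about the limits of the individual eigenvalues whenever those limits exist.
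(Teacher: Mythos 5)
Your proof is correct and follows essentially the same route as the paper's: compute the Jacobian of~\eqref{sysv} via the chain rule, observe that modulo the vanishing term $\dv{A^{-1}}{t}A$ it is the similarity transform $A^{-1}J_1A$ of the Jacobian $J_1$ of~\eqref{sysx}, and conclude that the limiting eigenvalues agree. The only difference is that you make explicit the eigenvalue-continuity and boundedness considerations needed to discard the vanishing additive term when passing to the limit, a step the paper's proof leaves implicit by simply asserting that the limiting Jacobians coincide.
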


\begin{proof}
       Let $A$ and $A^{-1}$ be written as block matrices:
       \begin{equation}
              A = \begin{pmatrix} A_{11} & A_{12}\\ A_{21} & A_{22} \end{pmatrix}, \quad A^{-1} = \begin{pmatrix} B_{11} & B_{12}\\ B_{21} & B_{22} \end{pmatrix},
       \end{equation}
       where each block is a $d \times d$ matrix.

       Then, in the limit as $t \to \infty$, the Jacobian of the vector field on the right-hand side of~\eqref{sysv} coincides with the limit of
       \begin{equation}
              A^{-1}
              \begin{pmatrix}
                     O_d & I_d\\
                     O_d & -a_1 I_d
              \end{pmatrix}
              A - 
              \begin{pmatrix}
                     B_{12} \\ B_{22}
              \end{pmatrix}
              a_2\nabla^2 f(A_{11}v_1 + A_{12}v_2)
              \begin{pmatrix}
                     A_{11} & A_{12}
              \end{pmatrix}.\label{appc3}
       \end{equation}
       Using the identity $A A^{-1} = I_{2d}$, we have:
       \begin{equation}
              A
              \begin{pmatrix}
                     B_{12} \\ B_{22}
              \end{pmatrix}
              =
              \begin{pmatrix}
                     O_d\\ I_d
              \end{pmatrix},\qquad
              \begin{pmatrix}
                     A_{11} & A_{12}
              \end{pmatrix}
              A^{-1}=
              \begin{pmatrix}
                     I_d & O_d
              \end{pmatrix}.
       \end{equation}
       Therefore, the eigenvalues of~\eqref{appc3} are the same as those of
       \begin{align}
              &\begin{pmatrix}
                     O_d & I_d\\
                     O_d & -a_1 I_d
              \end{pmatrix}
              -
              A
              \begin{pmatrix}
                     B_{12} \\ B_{22}
              \end{pmatrix}
              a_2\nabla^2 f(A_{11}v_1 + A_{12}v_2)
              \begin{pmatrix}
                     A_{11} & A_{12}
              \end{pmatrix}
              A^{-1}\\
              &\quad =
              \begin{pmatrix}
                     O_d & I_d\\
                     a_2\nabla^2 f(A_{11}v_1 + A_{12}v_2) & -a_1 I_d
              \end{pmatrix}
              =
              \begin{pmatrix}
                     O_d & I_d\\
                     a_2\nabla^2 f(x) & -a_1 I_d
              \end{pmatrix},
       \end{align}
       which is the Jacobian of the vector field in~\eqref{sysx}.
\end{proof}

From the proof, we can see that if we restrict ourselves to time-independent choices of $A$, the eigenvalues are invariant under changes of $A$.

In the case of \eqref{NAG}, we use a time-dependent matrix
\[
A(t) = \begin{pmatrix}
    1 & 0 \\
    -\dot{\alpha}(t) / \alpha(t) & \dot{\alpha}(t) / \alpha(t)
\end{pmatrix}
\]
to construct a first-order ODE from
\[
\ddot{x} + \left(\frac{2\dot{\alpha}}{\alpha} - \frac{\ddot{\alpha}}{\dot{\alpha}}\right)\dot{x} + \frac{(\dot{\alpha})^2}{2\alpha}\nabla f(x) = 0.
\]
The assumption $\dv{A^{-1}}{t} A \to O$ holds if $\dv{}{t} \log \frac{\alpha}{\dot{\alpha}} \to 0$, especially for the 1-essential time scale $\alpha(t) = t^2/L$.





\section*{Acknowledgments}
This work was supported by JSPS KAKENHI (24KJ0595).

\bibliographystyle{siamplain}
\bibliography{references}
\end{document}